\documentclass[a4paper, 11pt]{article}
\usepackage[psamsfonts]{amsfonts}
\usepackage{amsmath,amssymb,amsthm,dsfont}
\usepackage{graphicx}  
\usepackage{float}
\usepackage{mathrsfs}
\usepackage{subcaption}
\usepackage{hyperref}
\usepackage{xcolor}

\hypersetup{
    colorlinks=true,
    linkcolor=blue,
    urlcolor=red,
    citecolor=red
}
\newcommand{\kP}{\mathcal{P}}

\newcommand{\kS}{\mathcal{S}}

\newtheorem{theorem}{Theorem}[section]

\newtheorem{lemma}[theorem]{Lemma}
\newtheorem{corollary}[theorem]{Corollary}

\newtheorem{proposition}[theorem]{Proposition}

\newcommand{\be}{\begin{equation}}
\newcommand{\ee}{\end{equation}}

\newcommand{\arctanh}{{\rm atanh}}

\numberwithin{equation}{section}

\newcommand{\rem}[1]{}

\def\1{{\mathchoice {1\mskip-4mu\mathrm l}      
{1\mskip-4mu\mathrm l}
{1\mskip-4.5mu\mathrm l} {1\mskip-5mu\mathrm l}}}

\newcommand{\col}[1]{\textcolor[rgb]{0,0,0}{#1}}

\newcommand{\eqan}[1]{\begin{align}#1\end{align}}

\DeclareMathSymbol{\dagger}{\mathbin}{letters}{"8E}

\def\be#1{\begin{equation*}#1\end{equation*}}

\title{Phase diagram and fluctuations in the cubic mean field Ising model}
\author{Van Hao Can\thanks{Institute of Mathematics, Vietnam Academy of Science and Technology. Email: cvhao@math.ac.vn}
\and Viet Tinh Dang\thanks{University of Khanh Hoa and Institute of Mathematics, Vietnam Academy of Science and Technology. Email: dangviettinh@ukh.edu.vn}}
\date{}
\begin{document}
\maketitle

\begin{abstract}
We study the cubic mean field Ising model with three body and two body interactions and an external field. Our main result is a complete classification of the global maximizers of the variational pressure in the full three-parameter region $(K,J,h)\in(0,\infty)\times\mathbb R^2$. We identify a coexistence surface with two nondegenerate global maximizers and a critical curve with a unique quartically degenerate maximizer; elsewhere the maximizer is unique and nondegenerate. This extends the phase diagrams previously obtained on two-parameter subfamilies \cite{CMO24,O24}. As applications of the general results in \cite{CR}, we give explicit magnetization fluctuation theorems and asymptotic distributions for one-parameter maximum likelihood estimators in the resulting regimes.
\end{abstract}

\section{Introduction}\label{sec:introduction}
The Ising model was introduced to describe cooperative behavior and phase transitions in ferromagnetic systems and has since become a central model of interacting spins on graphs; see, for example, \cite{E85,N05,N09}. Its classical Hamiltonian contains pairwise interactions, whereas higher-order Ising models allow simultaneous interactions among three or more spins. Such interactions have been considered in statistical physics and in models of complex networks, where pairwise dependencies may not capture all relevant features; see \cite{B20,HBH89,SL99,S72,Y19,CKO22}. In particular, cubic interactions break spin flip symmetry and may change the nature and location of phase transitions. On general graphs, the analysis of multi-spin interactions is delicate, and the correlation inequalities commonly used for ferromagnetic pair interaction models do not directly provide a general theory. The mean-field setting offers an analytically tractable starting point, while retaining nontrivial competition between the interaction and entropy terms.

The cubic mean-field Ising model has already been
investigated on important two-parameter subfamilies.
Contucci, Mingione and Osabutey \cite{CMO24} studied
the interaction parameters $(K,J)$ in zero external
field ($h=0$), establishing the phase diagram and
limit theorems for the magnetization.
For the same model, Eichelsbacher \cite{E24}
applied Stein's method to obtain quantitative
bounds for the magnetization fluctuations,
together with concentration and moderate
deviation results. The model with a cubic interaction and an external field, but without the quadratic interaction ($J=0$), is a special case of the $p$-spin Curie--Weiss model studied by Mukherjee, Son and Bhattacharya \cite{MSB21,MSB25}. Its phase properties and critical exponents were also investigated by Osabutey \cite{O24}. 
Contucci, Osabutey and Vernia \cite{COV23} studied the inverse problem for the full three-parameter cubic model. They derived formulas for reconstructing the model parameters from the magnetization, susceptibility and third cumulant, and investigated the performance of their method numerically.

These studies motivate the \emph{simultaneous}
variation of all three parameters. It is not
enough simply to combine two previously studied
planar diagrams: one must determine which
stationary branches are globally maximizing in
three-dimensional parameter space, locate their
equal-height locus, and describe how this locus
terminates at critical degeneracy. Our principal result provides this global geometric classification for  $(K,J,h)\in(0,\infty)\times\mathbb R^2$. The scaling limit for magnetization and the maximum likelihood estimators for parameters   are then applications of the general mean field results of Can and R\"ollin \cite{CR}. 

\paragraph{The model and free energy.}
For $N\ge 1$, let $\sigma=(\sigma_1,\ldots,\sigma_N)\in\{-1,+1\}^N$, and denote the magnetization by
\begin{equation}\label{def:mN-intro}
m_N(\sigma)=\frac{1}{N}\sum_{i=1}^N\sigma_i.
\end{equation}
We consider the Hamiltonian
\begin{equation}\label{Hamiltonian}
\begin{split}
H_N(\sigma)
&=\frac{K}{3N^2}\sum_{i,j,k=1}^N\sigma_i\sigma_j\sigma_k
+\frac{J}{2N}\sum_{i,j=1}^N\sigma_i\sigma_j
+h\sum_{i=1}^N\sigma_i\\
&=N\left(\frac K3m_N^3+\frac J2m_N^2+hm_N\right).
\end{split}
\end{equation}
The inverse temperature has been absorbed into the parameters $K,J,h$. The Gibbs measure and the partition function are given by
\begin{equation}\label{BG measure}
\mu_{N,K,J,h}(\sigma)=\frac{1}{Z_N(K,J,h)}e^{H_N(\sigma)},
\qquad Z_N(K,J,h)=\sum_{\sigma\in\{-1,+1\}^N}e^{H_N(\sigma)}.
\end{equation}
Simultaneous reversal of all spins transforms $(K,J,h)$ into $(-K,J,-h)$. Consequently, it suffices to analyze $K>0$ to obtain the corresponding $K<0$ results by symmetry. Throughout, our phase diagram is stated on the open region
\begin{equation}\label{parameter-region}
\kP=(0,\infty)\times\mathbb R\times\mathbb R.
\end{equation}
The free energy is then defined by  $$p_N(K,J,h)=\frac{1}{N}\log Z_N(K,J,h).$$
By the standard variational principle (see \cite{E85,DZ98}), its thermodynamic limit is
\begin{equation}\label{theormodynamic limit}
\begin{split}
p(K,J,h)&:=\lim_{N\to\infty}p_N(K,J,h)
=\max_{m\in[-1,1]}\phi_{K,J,h}(m),\\
\phi_{K,J,h}(m)&=\frac K3m^3+\frac J2m^2+hm-I(m),
\end{split}
\end{equation}
where the binary entropy contribution is
\begin{equation}\label{entropy}
I(m)=\frac{1-m}{2}\log\frac{1-m}{2}
+\frac{1+m}{2}\log\frac{1+m}{2},\qquad m\in[-1,1],
\end{equation}
with the convention $0\log0=0$. Thus, the equilibrium magnetizations are the global maximizers of the one dimensional function $\phi_{K,J,h}$. Moreover, it turns out that their number and degeneracy determine both the macroscopic phase structure and the scale of model's fluctuations.

This variational description has a direct probabilistic interpretation. The number of spin configurations with magnetization $m=2k/N-1$ is $\binom{N}{k}$, and its exponential growth contributes the entropy term $-I(m)$. Therefore, the law of $m_N$ is governed, on the exponential scale, by the competition between energy and entropy encoded by $\phi_{K,J,h}$. A unique global maximizer corresponds to a single thermodynamic phase, whereas several maximizers of equal height produce macroscopic phase coexistence. Locally, the first nonzero term in the Taylor expansion at a maximizer determines the fluctuation scale. These facts also explain why determining the \emph{global} rather than merely local extrema is essential for the subsequent probabilistic results.

A global maximizer $m$ is called \emph{$2k$-regular} if
$\phi_{K,J,h}^{(j)}(m)=0$ for $1\le j<2k$ and
$\phi_{K,J,h}^{(2k)}(m)<0$.
As shown in Section 2, every global maximizer in the present model is either \(2\)-regular or \(4\)-regular. The main problem is therefore to determine the number and regularity of the global maximizers throughout the three dimensional parameter space.

\paragraph{Main result: the full phase diagram.}
Our main result provides a complete classification of the global
maximizers of $\phi_{K,J,h}$ in the three dimensional parameter
space $\mathcal P$. The classification is governed by the
concavity properties of $\phi_{K,J,h}$. Indeed, its second
derivative is given by
\[
\phi''_{K,J,h}(m)
=J-\left(\frac{1}{1-m^2}-2Km\right).
\]
This motivates the definition
\begin{equation}\label{tau-intro}
\tau(K):=\min_{-1<m<1}
\left\{\frac{1}{1-m^2}-2Km\right\},
\qquad K>0.
\end{equation}
As shown in Section~\ref{sec:stationary}, the minimum is
attained at a unique point in $(-1,1)$.

For $J\leq\tau(K)$, the function $\phi_{K,J,h}$
has a unique global maximizer for every $h\in\mathbb R$.
When $J>\tau(K)$, the function may have two distinct local
maxima, depending on the external field $h$.
As $h$ varies, the two maxima attain the same height
at a unique value $h=S(K,J)$. The corresponding parameters
form the coexistence surface $\mathcal S$.

A critical curve $\gamma$ arises at the boundary of this
surface, where the two competing maxima merge. It is
characterized by the simultaneous vanishing of the first
three derivatives of $\phi_{K,J,h}$ and consists of parameters
for which the unique global maximizer is $4$-regular.
The following theorem gives a precise description of these
sets and the global maximizers in all parameter regimes.

\begin{theorem}[Full three-parameter phase diagram]
\label{Phase diagram}
There exists a unique smooth function
\[
S:\bigl\{(K,J):K>0,\ J>\tau(K)\bigr\}
\rightarrow\mathbb R
\]
such that $h=S(K,J)$ is exactly the field at which
$\phi_{K,J,h}$ has two distinct global maximizers.
Define the coexistence surface by
\begin{equation}\label{defsur}
\mathcal S
=\{(K,J,S(K,J)):K>0,\ J>\tau(K)\}
\end{equation}
and the critical curve by
\begin{equation}\label{defgamma}
\gamma=
\left\{
\left(
\frac{t}{(1-t^2)^2},
\frac{1-3t^2}{(1-t^2)^2},
\operatorname{atanh}(t)
-\frac{t-2t^3}{(1-t^2)^2}
\right):0<t<1
\right\}.
\end{equation}
Then the global maximizers of $\phi_{K,J,h}$
are completely classified as follows.
\begin{enumerate}
\item[(a)] \emph{Coexistence.}
If $(K,J,h)\in\mathcal S$, there are exactly two
distinct global maximizers $m_1<m_2$, both $2$-regular.

\item[(b)] \emph{Criticality.}
If $(K,J,h)\in\gamma$, there is a unique global
maximizer $m_*$, which is $4$-regular.

\item[(c)] \emph{Regular single phase.}
If $(K,J,h)\in\mathcal P\setminus
(\mathcal S\cup\gamma)$, there is a unique global
maximizer $m_*$, which is $2$-regular.
\end{enumerate}
Moreover, the closure of the coexistence surface
within $\mathcal P$ satisfies
\[
\overline{\mathcal S}^{\,\mathcal P}
=\mathcal S\cup\gamma.
\]
\end{theorem}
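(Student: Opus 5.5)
The plan is to reduce everything to the shape of the function $g_K(m):=\frac{1}{1-m^2}-2Km$, since $\phi''_{K,J,h}(m)=J-g_K(m)$ and $\phi'_{K,J,h}(m)=Km^2+Jm+h-\operatorname{atanh}(m)$.

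\textbf{Step 1: convexity of $g_K$.} A direct computation gives $g_K''(m)=\frac{2+6m^2}{(1-m^2)^3}>0$, and $g_K\to+\infty$ at $\pm1$. Hence $g_K$ has a unique minimizer $t_K\in(0,1)$, characterized by $\frac{2t}{(1-t^2)^2}=2K$, that is $K=t/(1-t^2)^2$. This is a bijection $t\mapsto K$ from $(0,1)$ onto $(0,\infty)$. The minimum value is $\tau(K)=\frac{1}{1-t^2}-\frac{2t^2}{(1-t^2)^2}=\frac{1-3t^2}{(1-t^2)^2}$, which matches the $J$-coordinate in \eqref{defgamma}.

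Consequences for the critical points of $\phi$:
\begin{itemize}
\item If $J<\tau(K)$, then $\phi$ is strictly concave, so it has a unique stationary point, which is $2$-regular.
\item If $J=\tau(K)$, then $\phi''\le0$ with equality only at $t_K$. So $\phi'$ is strictly decreasing, and the maximizer $m_*$ is unique. Because $\phi'\to\pm\infty$ at $\mp1$, the maximizer is interior. Since $\phi''$ has a double zero at $t_K$, the point $m_*$ is $2$-regular unless $m_*=t_K$. In that case $\phi'(t_K)=0$, which forces $h=\operatorname{atanh}(t)-Kt^2-Jt$, exactly the third coordinate of $\gamma$. There $\phi'''(t_K)=-g_K'(t_K)=0$ and $\phi^{(4)}(t_K)=-g_K''(t_K)<0$, so $m_*$ is $4$-regular.
\item If $J>\tau(K)$, then $\phi''$ vanishes at exactly two points $a<b$ (strict convexity of $g_K$). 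So $\phi'$ decreases on $(-1,a)$, increases on $(a,b)$, and decreases on $(b,1)$. Thus $\phi$ has at most two local maxima $m_1<a$ and $m_2>b$, both $2$-regular, together with possibly a local minimum between them.
\end{itemize}
Note that $h$ enters only through the additive term $hm$.

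\textbf{Step 2: construction of $S$.} Fix $(K,J)$ with $J>\tau(K)$. Two local maxima exist precisely when $h\in(h_-,h_+)$, where $h_-=\operatorname{atanh}(b)-Kb^2-Jb$ and $h_+=\operatorname{atanh}(a)-Ka^2-Ja$; these are the values of $-(\phi'-h)$ at the inflection points $b$ and $a$. On this interval, define
\[
D(h)=\phi_{K,J,h}(m_2(h))-\phi_{K,J,h}(m_1(h)).
\]
The envelope theorem gives $D'(h)=m_2-m_1>0$, so $D$ is strictly increasing. At the endpoints one of the maxima degenerates into an inflection point, and there $D<0$ at $h_-$ and $D>0$ at $h_+$; this is because at $h_-$ the point $m_2$ merges with the local minimum, so it is lower than $m_1$. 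Hence $D$ has a unique zero $S(K,J)$.

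Smoothness follows from the implicit function theorem applied to the system
\[
\phi'(m_1)=\phi'(m_2)=0,\qquad \phi(m_1)=\phi(m_2).
\]
Its Jacobian with respect to $(m_1,m_2,h)$ is nondegenerate because $\phi''(m_i)<0$ and $m_2\ne m_1$. Outside $h=S$ the maximizer is unique and $2$-regular; at $h=S$ there are two $2$-regular maximizers. This gives (a) and, together with Step 1, (c) and (b). Criticality can only occur when $J=\tau(K)$, since for $J>\tau(K)$ both maxima are $2$-regular.

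\textbf{Step 3: closure.} As $J\downarrow\tau(K)$, the points $a$ and $b$ merge at $t_K$ and $h_\pm\to\operatorname{atanh}(t)-Kt^2-Jt$. Since $S\in(h_-,h_+)$, the values $S(K,J)$ converge to the $\gamma$-coordinate, so $\gamma\subset\overline{\mathcal S}$.

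Conversely, consider a limit point in $\mathcal P$ of points of $\mathcal S$. If the limit has $J>\tau(K)$, it lies in $\mathcal S$ by continuity of $S$. The case $J<\tau(K)$ cannot occur, since $\{J>\tau\}$ is relatively closed away from $J=\tau$, $\tau$ being continuous. If the limit has $J=\tau(K)$, then the bounds $h_-<S<h_+$ squeeze $h$ to the $\gamma$-value.

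\textbf{Expected obstacle.} The main obstacle is the boundary sign claim $D(h_\pm)\gtrless0$, together with the uniformity needed in the closure argument when $J\downarrow\tau$. I would handle the first by noting that at $h_-$ the function $\phi'$ has a double zero at $b$ and is negative on $(m_1,b)$ apart from that point, so $\phi$ decreases from $m_1$ to $b$. This avoids any explicit computation.
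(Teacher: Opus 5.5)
Your proposal is correct and follows essentially the same route as the paper: strict convexity of $\frac{1}{1-m^2}-2Km$ (the paper's $F_K$) to classify the stationary points, the envelope identity $\partial_h[\phi(m_2)-\phi(m_1)]=m_2-m_1>0$ together with the boundary signs and the implicit function theorem to construct $S$, the identification of $\gamma$ with the points where $J=\tau(K)$ and the maximizer sits at $\overline{m}_0(K)$, and the squeeze $h_-<S<h_+$ for the closure. The one step you flagged but did not carry out is convergence of $h_\pm(K_n,J_n)$ as $(K_n,J_n)\to(K_0,\tau(K_0))$ with $K_n$ also moving; the paper handles it by compactness: the two roots of $F_{K_n}(m)=J_n$ stay in a fixed compact subinterval of $(-1,1)$ because $F_K\to+\infty$ at $\pm1$ uniformly for bounded $K$, and every accumulation point solves $F_{K_0}(m)=\tau(K_0)$, hence equals the unique minimizer $\overline{m}_0(K_0)$.
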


\begin{figure}[tbp]
\centering
\includegraphics[width=.62\linewidth]{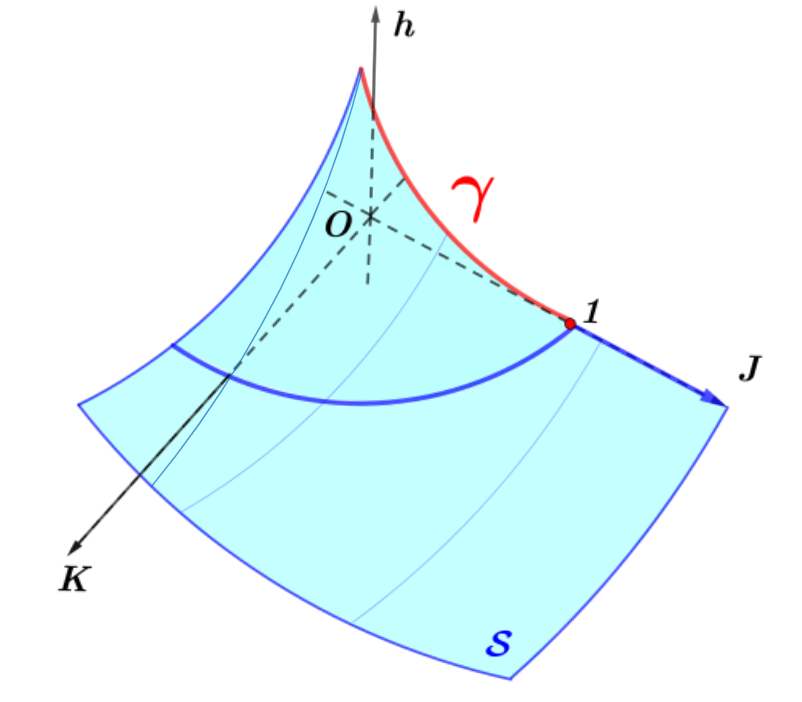}
\caption{Schematic phase diagram in the $(K,J,h)$ parameter space. The surface $\mathcal S$ represents coexistence of two nondegenerate global maximizers, while the curve $\gamma$ represents a unique quartically degenerate maximizer. }
\label{phase diagram}
\end{figure}

\begin{figure}[tbp] 
    \centering
    
    \begin{subfigure}{0.4\textwidth}
        \centering
        \includegraphics[width=\linewidth]{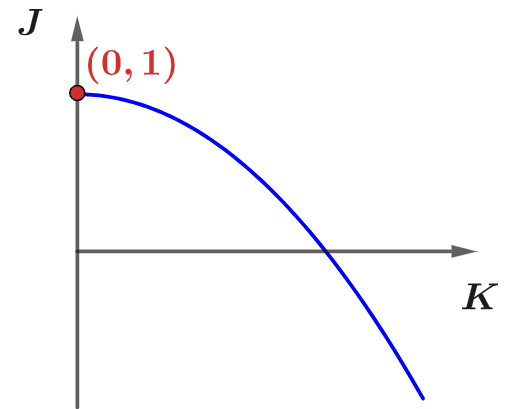}
        \caption{}
        \label{}
    \end{subfigure}
    \hspace{1.75cm}
    \begin{subfigure}{0.38\textwidth}
        \centering
        \includegraphics[width=\linewidth]{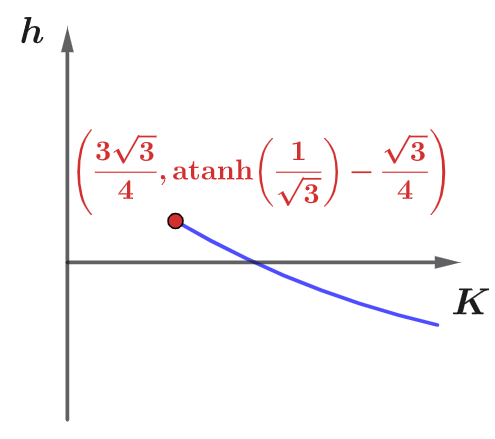}
        \caption{}
        \label{}
    \end{subfigure}
    \caption{The intersections of the surface $\mathcal S$ with (a) the plane $h=0$ and (b) the plane $J=0$. In each case, the blue curve  represents the intersection while the red dot is the critical point.} 
    \label{Figure2}
\end{figure}

The coexistence surface $\mathcal S$ describes a first order phase transition. At each point of $\mathcal S$, the two global maximizers $m_1<m_2$ correspond to distinct equilibrium phases. In particular, the limiting pressure is not differentiable with respect to $h$ across this
surface. The critical curve $\gamma$ forms the boundary of $\mathcal S$, where the two phases merge into a single degenerate phase. Thus, the theorem extends the previously known two parameter phase diagrams to the full $(K,J,h)$-model.


\paragraph{Fluctuation of the magnetization.}
The asymptotic behavior of magnetization in mean-field models has been studied extensively, starting from the classical works of Ellis and Newman \cite{EN78} and Ellis, Newman and Rosen \cite{ENR80}; see also \cite{E85,MSB21} for further developments. More recently, Can and R\"ollin \cite{CR} provided a general framework with quantitative convergence bounds, which we apply to the present model. It turns out that fluctuation regimes are determined by  the maximizers and their regularity orders. In the regular single phase region, the quadratic expansion around the unique maximizer gives Gaussian fluctuations: $m_N$ concentrates exponentially around $m_*$ and
\begin{equation}\label{intro-mag-normal}
\sqrt N(m_N-m_*)\ \Rightarrow\
 N\!\left(0,-\frac{1}{\phi''_{K,J,h}(m_*)}\right).
\end{equation}
On the coexistence surface, there are two contributions of equal exponential order to the partition function. The magnetization therefore converges to a nontrivial mixture of point masses at $m_1$ and $m_2$; conditional on being near either maximizer, the quadratic expansion still gives Gaussian fluctuations on the $N^{-1/2}$ scale, with explicit variances and weights. On the critical curve the quadratic and cubic terms vanish at the unique maximizer. The negative quartic term becomes the first nonzero contribution, so the fluctuations are larger and
\begin{equation}\label{intro-mag-quartic}
N^{1/4}(m_N-m_*)\Rightarrow Y_*,
\qquad \mathbb P(Y_*\in dx)=C_*\exp\!\left\{
\frac{\phi^{(4)}_{K,J,h}(m_*)}{24}x^4\right\}\,dx.
\end{equation}
The general results also yield quantitative Wasserstein convergence rates: $O(N^{-1/2})$ for the Gaussian fluctuations and $O(N^{-1/4})$ on the critical curve. The complete statement, including exponential concentration, coexistence weights and conditional limits, is given in Theorem~\ref{limit:magn}.

\paragraph{Fluctuation of the estimators.} 
The asymptotic behavior of maximum likelihood
estimators in mean-field models has been investigated by Comets and Gidas \cite{CG91} for the Curie--Weiss model and, more recently, by Mukherjee, Son and Bhattacharya \cite{MSB25} for higher-order $p$-spin models. We apply the general results in \cite[Theorem~4.1]{CR}  to obtain the scaling limits of the estimators in the present model.
More precisely, we estimate each of the three model
parameters separately, treating the other two as know. Each parameter enters the Hamiltonian linearly, and  given a single spin configuration, the associated sufficient statistic is $m_N^3/3$, $m_N^2/2$, or $m_N$, according as one estimates $K$, $J$, or $h$. The MLE equates this observed statistic to its expectation under the fitted Gibbs measure. Its rate is determined by how sensitively that expectation responds to a local perturbation of the parameter. The general theory of \cite{CR} applies once the phase diagram and the nondegeneracy conditions at the relevant maxima have been checked.

Away from the exceptional parameter sets, the MLEs exhibit three distinct behaviors. In the regular single phase regime they have a Gaussian limit on the usual $N^{-1/2}$ scale. On the coexistence surface a positive or negative local change of parameter favors the well having, respectively, the larger or smaller sufficient statistic; this selection creates a non-Gaussian limit with an atom at zero and two one-sided Gaussian components. On the critical curve a parameter perturbation of size $N^{-3/4}$ changes the quartic limiting law by an order-one linear tilt, and hence gives the correct estimation scale $N^{-3/4}$, rather than the $N^{-1/4}$ magnetization scale. Precise distributions, hypotheses and exceptional sets are stated in Theorem~\ref{limit:mle}. 

\paragraph{Proof strategy and organization.}
Section~\ref{sec:phase} proves Theorem~\ref{Phase diagram}. The proof first analyzes the stationary equation and shows that its geometry is controlled by a strictly convex auxiliary function. It then compares the two local maximizing branches to construct the unique coexistence surface and identifies the critical curve through the vanishing of the first three derivatives of the variational pressure. Section~\ref{sec:applications} contains the full statements and proofs of the fluctuation for the magnetization and one-parameter MLE  using general results in \cite{CR}. 

\paragraph{Notation.}
We write $X_N\Rightarrow X$ for convergence in distribution
and $\mathcal L(X)$ for the law of $X$.
Let $N(\mu,\sigma^2)$ denote
the Gaussian distribution with mean $\mu$ and variance $\sigma^2$.
For probability laws with finite first moments, the
$1$-Wasserstein distance is defined by
\[
d_W(\mathcal L(X),\mathcal L(Y))
=\sup_{\operatorname{Lip}(f)\le 1}
\bigl|\mathbb E[f(X)]-\mathbb E[f(Y)]\bigr|.
\]

\section{The phase diagram}\label{sec:phase}
\subsection{Stationary points and regularity}\label{sec:stationary}
We begin by computing the derivatives of $\phi_{K,J,h}$ on $(-1,1)$:
\begin{align}\label{first der}
		& \phi_{K,J,h}'(m)=Km^2+Jm+h-\arctanh(m)\\ \label{second der}
        & \phi_{K,J,h}''(m)=2Km+J-\dfrac{1}{1-m^2}\\ 
		& \phi_{K,J,h}^{(3)}(m)=2K-\dfrac{2m}{(1-m^2)^2}\\
		& \phi_{K,J,h}^{(4)}(m)=-\dfrac{2(3m^2+1)}{(1-m^2)^3}<0,\;\;\; \forall m\in (-1,1). \label{fourth der}
	\end{align}
Every interior maximizer  satisfies the stationarity equation \begin{equation}\label{m=tanh}
	 \phi_{K,J,h}'(m) =	 Km^2+Jm+h-\arctanh(m)=0.
	\end{equation}   
The continuous function $\phi_{K,J,h}$ attains a maximum on $[-1,1]$.
Since $\phi'_{K,J,h}(m)\to+\infty$ as $m\downarrow-1$ and
$\phi'_{K,J,h}(m)\to-\infty$ as $m\uparrow1$, every maximizer lies
in $(-1,1)$ and satisfies \eqref{m=tanh}. Define
\begin{equation}
    g_{K,J}(m)=\arctanh(m)-Km^2-Jm, \;\; m\in(-1,1).
\end{equation}
Note that $\phi_{K,J,h}'(m)=h-g_{K,J}(m)$. Hence,  the maximizers of $\phi_{K,J,h}(m)$ can be characterized by  the intersection between the line $h$ and the curve $g_{K,J}(m)$. The following result gives some properties of the function $g_{K,J}(m)$.
   \begin{proposition}[Properties of $g_{K,J}(m)$]\label{Properties of g} For any $K>0$, define the function
   \begin{equation}
       F_K(m):=\dfrac{1}{1-m^2}-2Km,\;\; m\in (-1,1).
   \end{equation} 
   Then $F_K(m)$ attains its minimum at a unique point $\overline{m}_0=\overline{m}_0(K)\in (-1,1)$. Furthermore, writing
   \begin{equation}
       \tau(K)=\displaystyle\min_{m\in (-1,1)} F_K(m)=F_K(\overline{m}_0),
   \end{equation}
   we have  
   \begin{enumerate}
       \item [(a)] if $J<\tau(K)$, $g_{K,J}(m)$  strictly increases on $(-1,1)$;
       \item [(b)] if $J>\tau(K)$, $g_{K,J}(m)$  has a local minimum   $\underline{m}$ and a local maximum $\overline{m}$ satisfying $\overline{m}<\underline{m}$.
       \item[(c)] if $J=\tau(K)$, $g_{K,J}(m)$ increases on $(-1,1)$ and has a unique inflection point, which is $\overline{m}_0.$
   \end{enumerate}
    \end{proposition}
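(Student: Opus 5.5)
The whole argument rests on the identity $g_{K,J}'(m)=\frac{1}{1-m^2}-2Km-J=F_K(m)-J$. Every monotonicity question about $g_{K,J}$ therefore reduces to comparing $F_K$ with the horizontal level $J$. I will first establish the structure of $F_K$ and then read off (a)--(c).

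\emph{Step 1: strict convexity and a unique minimizer of $F_K$.} Compute
\[
F_K'(m)=\frac{2m}{(1-m^2)^2}-2K,\qquad F_K''(m)=\frac{2(1+3m^2)}{(1-m^2)^3}>0 .
\]
The second derivative is $-\phi^{(4)}_{K,J,h}(m)$ from \eqref{fourth der}. Hence $F_K$ is strictly convex on $(-1,1)$ and $F_K'$ is strictly increasing. Moreover $F_K'(m)\to-\infty$ as $m\downarrow-1$ and $F_K'(m)\to+\infty$ as $m\uparrow1$. So $F_K'$ has exactly one zero $\overline m_0\in(-1,1)$, determined by $\overline m_0/(1-\overline m_0^2)^2=K$. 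Since $K>0$, we in fact have $\overline m_0\in(0,1)$.

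It follows that $F_K$ is strictly decreasing on $(-1,\overline m_0]$ and strictly increasing on $[\overline m_0,1)$, with $F_K(m)\to+\infty$ at both endpoints. Thus $\overline m_0$ is the unique global minimizer and $\tau(K)=F_K(\overline m_0)$ is well defined.

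\emph{Step 2: the three cases.}

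For (a), if $J<\tau(K)$ then $g_{K,J}'=F_K-J\ge\tau(K)-J>0$ on $(-1,1)$, so $g_{K,J}$ is strictly increasing.

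For (c), if $J=\tau(K)$ then $g_{K,J}'\ge0$ and vanishes only at $\overline m_0$. A continuous function whose derivative is nonnegative and vanishes at a single point is strictly increasing. Also $g_{K,J}''=F_K'$ changes sign exactly once, at $\overline m_0$, from negative to positive. So $\overline m_0$ is the unique inflection point.

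For (b), if $J>\tau(K)$, the level set $\{F_K=J\}$ consists of exactly two points $a<\overline m_0<b$. This follows from strict monotonicity on each side of $\overline m_0$, together with $F_K(\overline m_0)<J$ and $F_K\to+\infty$ at $\pm1$. The sign of $g_{K,J}'$ is then
\begin{itemize}
\item positive on $(-1,a)$,
\item negative on $(a,b)$,
\item positive on $(b,1)$.
\end{itemize}
Hence $\overline m:=a$ is a strict local maximum and $\underline m:=b$ is a strict local minimum of $g_{K,J}$, with $\overline m<\underline m$ as claimed.

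\emph{Main obstacle.} There is no serious obstacle. The only point needing care is the existence of exactly two crossings in (b), and strict convexity combined with the boundary blow-up of $F_K$ handles it.
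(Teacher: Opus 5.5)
Your proposal is correct and follows essentially the same route as the paper: you get a unique minimizer $\overline m_0$ from $F_K''=-\phi^{(4)}_{K,J,h}>0$ and the boundary limits of $F_K'$, and then read off (a)--(c) from $g_{K,J}'=F_K-J$ and $g_{K,J}''=F_K'$ by comparing $F_K$ with the level $J$. Your extra observations are correct small refinements of the paper's statement: $\overline m_0\in(0,1)$ because $F_K'(0)=-2K<0$, and $g_{K,J}$ is in fact strictly increasing in case (c).
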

\begin{figure}[tbp] 
    \centering
    
    \begin{subfigure}{0.3\textwidth}
        \centering
        \includegraphics[width=\linewidth]{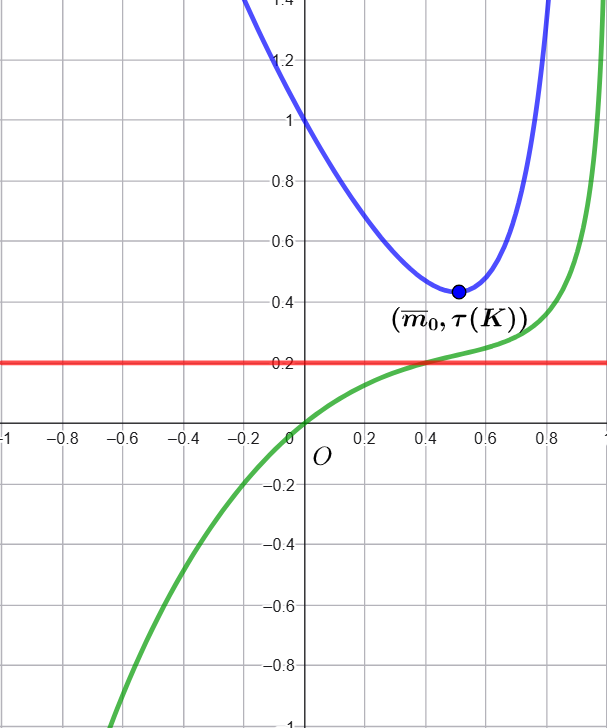}
        \caption{$J<\tau(K)$}
        \label{1a}
    \end{subfigure}
    \hfill
    \begin{subfigure}{0.25\textwidth}
        \centering
        \includegraphics[width=\linewidth]{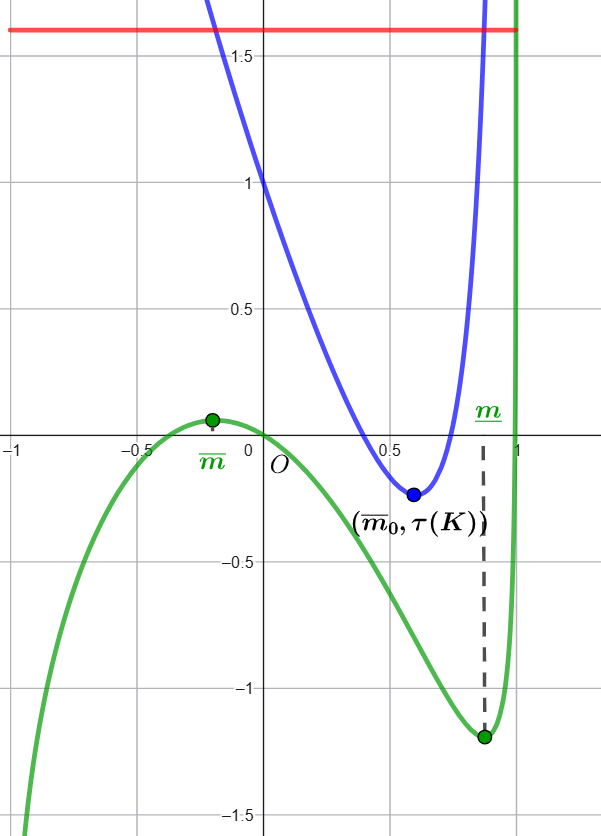}
        \caption{$J>\tau(K)$}
        \label{1b}
    \end{subfigure}
    \hfill
    \begin{subfigure}{0.3\textwidth}
        \centering
        \includegraphics[width=\linewidth]{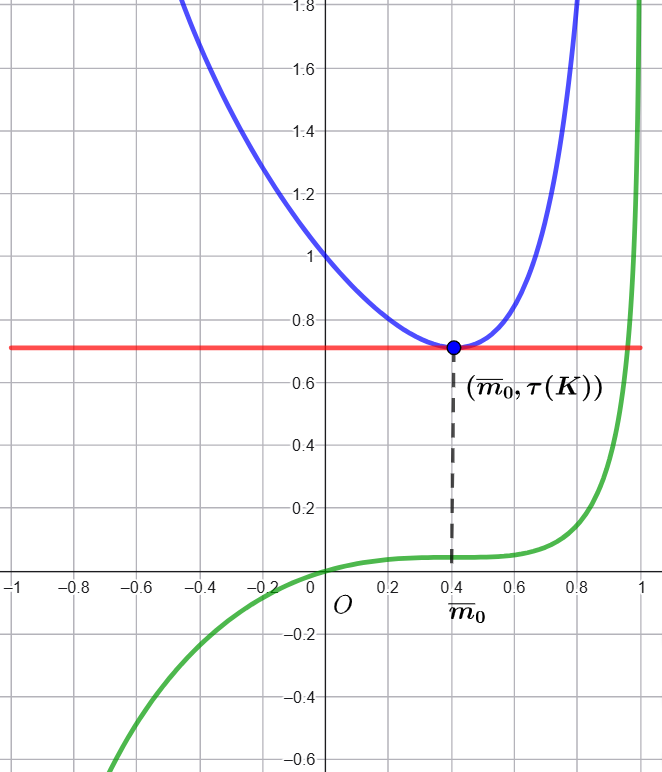}
       \caption{$J=\tau(K)$}
        \label{1c}
    \end{subfigure}

    \caption{Graph of functions $g_{K,J}(m)$ (the green curve), $F_K(m)$ (the blue curve) and $J$ (the red line) in cases: (a) $J<\tau(K)$, (b) $J>\tau(K)$ and (c) $J=\tau(K)$.} 
    \label{Figure1}
\end{figure}
    \begin{proof} First, we prove that $F_K(m)$ has the unique  minimizer. Indeed, notice that by  \eqref{second der}
    \begin{equation}
       \phi_{K,J,h}''(m)= J-F_K(m),
    \end{equation}
and hence, for every $m\in(-1,1)$,
    \begin{align}
    F'_K(m)&=-\phi_{K,J,h}^{(3)}(m)\\
  \label{f''}  F''_K(m)&= -\phi_{K,J,h}^{(4)}(m)>0.
    \end{align}
    Thus $F'_K(m)$  strictly increases on $(-1,1)$ and
    \begin{equation}
        \lim_{m\rightarrow -1^+}F'_K(m)= -\infty \text{ and } \lim_{m\rightarrow 1^-}F'_K(m)=+\infty.
    \end{equation} Therefore, the function $F'_K(m)$ changes its sign from negative to positive at a unique point, denoted by $\overline{m}_0$, which is the unique minimizer of $F_K(m)$ on $(-1,1)$.\\
    Next, since 
    \begin{equation}\label{g'}
        g'_{K,J}(m)= \dfrac{1}{1-m^2}-2Km-J= F_K(m)-J,
    \end{equation}
    the critical points of $g_{K,J}(m)$ are the intersection points between the line $J$ and the curve $F_K(m)$.
   \begin{enumerate}
       \item [(a)] If $J<\tau(K)$ (see Figure \ref{1a}), $g_{K,J}(m)$  strictly increases on $(-1,1)$ since  $F_K(m)-J\ge \tau(K)-J>0$ for all $m\in(-1,1)$.
       \item[(b)] If $J>\tau(K)$ (see Figure \ref{1b}), we can check that $F_K(m)$  strictly decreases on $(-1, \overline{m}_0]$,  strictly increases on $[\overline{m}_0,1)$ and $\lim_{m\rightarrow -1^+}F_K(m)= \lim_{m\rightarrow 1^-}F_K(m)=+\infty$. Therefore, the line $J$ intersects $F_K(m)$ at two distinct points $\overline{m}$ and $ \underline{m}$, which satisfy
       \begin{equation}
           \overline{m}<\overline{m}_0< \underline{m}.
       \end{equation}
       On the other hand, the line $J$ lies above the curve $F_K(m)$ on  $(\overline{m},\underline{m})$ and lies below this curve otherwise. Thus, using \eqref{g'}
       \begin{align}
         g'_{K,J}(m)   \begin{cases}
   <0, \text{if } m\in (\overline{m},\underline{m})\\
   =0, \text{if } m\in \{\overline{m},\underline{m}\}\\
   >0,  \text{otherwise}.
\end{cases}
       \end{align}
Therefore, $\overline{m}$ and $\underline{m}$ are the local maximizer and minimizer of $g_{K,J}(m)$, respectively.
  \item[(c)] If $J=\tau(K)=F_K(\overline{m}_0)$ then $g'_{K,J}(m)=0$ has a unique solution, which is $\overline{m}_0$. Recall that $F'_K(m)$ changes its sign from negative to positive at $\overline{m}_0$. Combining this fact  with $g''_{K,J}(m)=F'_K(m)$, we can conclude that $\overline{m}_0$ is the inflection point of $g_{K,J}(m).$
   \end{enumerate}
    \end{proof}
    
      \begin{corollary}\label{Regularity of maximizers} Consider the function  $\phi_{K,J,h}(m)$ with $(K,J,h)\in \mathcal{P}$.
        \begin{enumerate}
            \item [(i)] If $J<\tau(K)$  then $\phi_{K,J,h}''(m)<0$ for all $m\in (-1,1)$.
            \item[(ii)] If $J>\tau(K)$ then
            \begin{equation}\label{sign of phi''}
               \phi''_{K,J,h}(m) \begin{cases}
                    =0, \text{ if } m\in \{\overline{m},\underline{m}\}\\
                    >0, \text{ if } m\in (\overline{m},\underline{m})\\
                    <0, \text{ otherwise}.
                \end{cases}
            \end{equation}
            \item[(iii)] If $J=\tau(K)$ then $\phi_{K,J,h}''(\overline{m}_0)=0$ and $\phi_{K,J,h}''(m)<0$ when $m\ne \overline{m}_0$.
        \end{enumerate}
    \end{corollary}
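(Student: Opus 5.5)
The plan is to read Corollary~\ref{Regularity of maximizers} off from the identity $\phi''_{K,J,h}(m)=J-F_K(m)$ established in the proof of Proposition~\ref{Properties of g}. Equivalently, by \eqref{g'}, $\phi''_{K,J,h}(m)=-g'_{K,J}(m)$. The key observation is that $\phi''_{K,J,h}$ does not depend on $h$. So each of the three statements is a statement about where the horizontal line at height $J$ lies relative to the graph of $F_K$. That geometry was already settled in Proposition~\ref{Properties of g}.

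For (i), assume $J<\tau(K)$. Since $\tau(K)=\min_{(-1,1)}F_K$, we get $F_K(m)-J\ge\tau(K)-J>0$ for every $m\in(-1,1)$, hence $\phi''_{K,J,h}(m)<0$.

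For (ii), assume $J>\tau(K)$. I would reuse the sign pattern of $g'_{K,J}=F_K-J$ obtained in part (b) of the proposition's proof. That argument uses that $F_K$ is strictly decreasing on $(-1,\overline m_0]$, strictly increasing on $[\overline m_0,1)$, and tends to $+\infty$ at both endpoints. It then gives $g'_{K,J}=0$ exactly at $\overline m,\underline m$, $g'_{K,J}<0$ on $(\overline m,\underline m)$, and $g'_{K,J}>0$ elsewhere. Multiplying by $-1$ yields \eqref{sign of phi''}.

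For (iii), assume $J=\tau(K)=F_K(\overline m_0)$. Uniqueness of the minimizer $\overline m_0$ gives $F_K(m)>\tau(K)$ for $m\neq\overline m_0$. Hence $\phi''_{K,J,h}(\overline m_0)=0$ and $\phi''_{K,J,h}(m)<0$ for all other $m$.

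There is no real obstacle. The only point needing care is strictness in (ii) and (iii), and it follows from the strict monotonicity of $F_K$ on each side of $\overline m_0$, which in turn comes from $F_K''>0$ in \eqref{f''}. All three statements hold uniformly in $h\in\mathbb R$.
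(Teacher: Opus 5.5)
Your proposal is correct and follows the same route the paper intends. The paper gives no separate proof of this corollary; it reads the corollary off from the identity $\phi''_{K,J,h}=J-F_K=-g'_{K,J}$ and the sign analysis of $F_K-J$ in the proof of Proposition~\ref{Properties of g}, which is exactly your argument, with strictness in (ii)--(iii) coming from the strict convexity and unique minimizer of $F_K$.
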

    \begin{proposition}[Classification of critical points]\label{Classification of critical points} For any $(K,J,h)\in\mathcal{P}$, let $\tau(K)$, $\overline{m},\underline{m}$ and $\overline{m}_0$ be identified as in Proposition \ref{Properties of g}. Then the solutions of \eqref{m=tanh} are described as follows.
        \begin{enumerate}
            \item [(a)] Case $J<\tau(K)$. There exists a unique solution $m_0$, which is the global maximizer of $\phi_{K,J,h}(m)$.
             \item [(b)] Case $J>\tau(K)$. Let $\overline{\Psi}(K,J)=g_{K,J}(\overline{m})$ and $\underline{\Psi} (K,J)=g_{K,J}(\underline{m})$. Then 
             \begin{enumerate}
                 \item [(b1)] if $h<\underline{\Psi}(K,J)$ (resp. $h>\overline{\Psi}(K,J)$), there exists a unique solution $m_1$ (resp. $m_2$), which is the  global maximizer of $\phi_{K,J,h}(m)$;
                   \item [(b2)] if $\underline{\Psi}(K,J)<h<\overline{\Psi}(K,J)$, there exist three solutions  $m_1,m_2$ and $m_3$ (assume that $m_1<m_3<m_2)$, which satisfy:  $m_1$ and $m_2$ are local maximizers, while $m_3$ is a local minimizer  of $\phi_{K,J,h}(m)$;
                   \item [(b3)] if $h=\overline{\Psi}(K,J)$ (resp. $h=\underline{\Psi}(K,J)$), there exist  two solutions $m_2>\overline{m}$  (resp. $m_1<\underline{m}$), where $m_2$ (resp. $m_1$)  is  the unique global maximizer, while $\overline{m}$ (resp. $\underline{m}$) is the inflection point of $\phi_{K,J,h}(m)$.
             \end{enumerate}
             \item[(c)] Case $J=\tau(K)$. 
            \begin{enumerate}
                \item [(c1)] If $h= g_{K,J}(\overline{m}_0)$, there exists a unique solution $\overline{m}_0$, which is a global maximizer of $\phi_{K,J,h}(m)$.
                \item[(c2)] If $h\ne g_{K,J}(\overline{m}_0)$,  there exists a unique solution $m_0\ne \overline{m}_0$, which is a global maximizer of $\phi_{K,J,h}(m)$.
            \end{enumerate}
        \end{enumerate}
    Furthermore, for $i\in\{0,1,2\}$, $m_i$ is 2-regular, while $\overline{m}_0$ is 4-regular.
    \end{proposition}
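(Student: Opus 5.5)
The plan is to work entirely through the identity $\phi'_{K,J,h}(m)=h-g_{K,J}(m)$. Solutions of \eqref{m=tanh} are then the intersections of the horizontal line at height $h$ with the graph of $g_{K,J}$. The local character of each solution is read off from the sign change of $h-g_{K,J}$: a solution is a local maximizer of $\phi_{K,J,h}$ exactly when $g_{K,J}$ crosses the level $h$ upward, i.e.\ $\phi'$ goes from $+$ to $-$. Throughout I use that $g_{K,J}(m)\to-\infty$ as $m\downarrow-1$ and $g_{K,J}(m)\to+\infty$ as $m\uparrow 1$, because $\operatorname{atanh}$ dominates the polynomial part. I also use that every global maximizer is interior and stationary, as shown just before the statement. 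Regularity then follows from $\phi''=J-F_K$ (Corollary~\ref{Regularity of maximizers}) together with $\phi'''=-F_K'$ and $\phi^{(4)}<0$.

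\emph{Case (a), $J<\tau(K)$.} By Proposition~\ref{Properties of g}(a), $g_{K,J}$ is a strictly increasing bijection $(-1,1)\to\mathbb R$. Hence there is exactly one solution $m_0$. Since a global maximizer exists and must be stationary, $m_0$ is that maximizer, and it is $2$-regular by Corollary~\ref{Regularity of maximizers}(i).

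\emph{Case (c), $J=\tau(K)$.} By Proposition~\ref{Properties of g}(c), $g_{K,J}$ is increasing, and $g'_{K,J}$ vanishes only at $\overline m_0$, so $g_{K,J}$ is again a strictly increasing bijection. Thus there is a unique solution, which is the global maximizer.
\begin{itemize}
\item[(c1)] If $h=g_{K,J}(\overline m_0)$, this solution is $\overline m_0$. There $\phi'=\phi''=0$ and $\phi'''(\overline m_0)=-F_K'(\overline m_0)=0$, since $\overline m_0$ minimizes $F_K$. Together with $\phi^{(4)}<0$ from \eqref{fourth der}, this gives $4$-regularity.
\item[(c2)] If $h\neq g_{K,J}(\overline m_0)$, the solution $m_0$ differs from $\overline m_0$, so Corollary~\ref{Regularity of maximizers}(iii) gives $\phi''(m_0)<0$.
\end{itemize}

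\emph{Case (b), $J>\tau(K)$.} By Proposition~\ref{Properties of g}(b), $g_{K,J}$ is strictly increasing on $(-1,\overline m]$, strictly decreasing on $[\overline m,\underline m]$, and strictly increasing on $[\underline m,1)$, with $\overline\Psi>\underline\Psi$. Counting intersections monotone piece by monotone piece gives the following.
\begin{itemize}
\item[(b1)] For $h<\underline\Psi$, the only intersection lies in $(-1,\overline m)$, and the value there is below $\underline\Psi$. Symmetrically, for $h>\overline\Psi$ the only intersection lies in $(\underline m,1)$.
\item[(b2)] For $\underline\Psi<h<\overline\Psi$, there is exactly one intersection in each of the three pieces: $m_1<\overline m<m_3<\underline m<m_2$. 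Sign analysis of $h-g_{K,J}$ shows that $m_1$ and $m_2$ are local maximizers and $m_3$ is a local minimizer.
\item[(b3)] For $h=\overline\Psi$, the solutions are $\overline m$ and one point $m_2\in(\underline m,1)$. At $\overline m$, $g$ touches $h$ from below, so $\phi'\le 0$ on both sides and $\overline m$ is an inflection point, not an extremum. Hence the global maximizer, which must be stationary, is $m_2$. The case $h=\underline\Psi$ is symmetric.
\end{itemize}
In every subcase the maximizers $m_1,m_2$ lie outside $[\overline m,\underline m]$, so $\phi''<0$ there by \eqref{sign of phi''}. This gives $2$-regularity.

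The main subtlety is bookkeeping rather than a hard estimate. One must make sure the limits of $g_{K,J}$ at $\pm1$ guarantee an intersection on each outer piece. One must also justify that a point where $\phi'$ vanishes without changing sign is not a maximizer. Finally, in (b1)/(b3) "unique solution" must be correctly interpreted as uniqueness among maximizers. I do not expect any step to require more than the monotonicity established in Proposition~\ref{Properties of g}.
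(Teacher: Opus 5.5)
Your proposal is correct and follows essentially the same route as the paper. Both arguments intersect the level $h$ with the piecewise monotone graph of $g_{K,J}$ from Proposition~\ref{Properties of g}, use that a lone stationary point must be the interior global maximizer, and read off regularity from $\phi''=J-F_K$; you also supply the check $\phi^{(3)}(\overline m_0)=-F_K'(\overline m_0)=0$, which the paper leaves implicit in (c1).

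One small sign slip in (b3): $\overline m$ is a local maximum of $g_{K,J}$ with $g_{K,J}(\overline m)=h$, so $\phi'=h-g_{K,J}\ge 0$ (not $\le 0$) on both sides of $\overline m$. This agrees with the paper's $\phi^{(3)}(\overline m)=-F_K'(\overline m)>0$, and your conclusion that $\overline m$ is not an extremum is unaffected.
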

     \begin{proof}
     We first recall that  the function $\phi_{K,J,h}(m)$ is continuous on the compact set $[-1,1]$ and cannot attain its maximum at $1$ or $-1$. Hence,  if $\phi_{K,J,h}(m)$ has exactly one critical point, then this point is the unique global maximizer of $\phi_{K,J,h}(m)$.
  \begin{enumerate}
      \item [(a)] Case $J<\tau(K)$. The function $g_{K,J}(m)$ strictly increases  on $(-1,1)$ by  Proposition \ref{Properties of g}. Therefore, for any $h\in\mathbb{R}$, \eqref{m=tanh} has a unique solution $m_0$, which is the unique global maximizer of $\phi_{K,J,h}(m)$. 
      \item [(b)] Case $J>\tau(K)$.
    \begin{enumerate}
        \item [(b1)] If $h<\underline{\Psi}(K,J)$ then \eqref{m=tanh} has  a unique solution $m_1<\overline{m}$, which is the unique global maximizer of $\phi_{K,J,h}(m)$. Similarly, if $h>\overline{\Psi}(K,J)$, then $\phi(m)$ has a unique global maximizer $m_2>\underline{m}$.
        \item [(b2)] If $\underline{\Psi}(K,J)<h<\overline{\Psi}(K,J)$ then \eqref{m=tanh} has three solutions $m_1,m_2$ and $m_3$ satisfying
        \begin{equation}
            m_1<\overline{m}<m_3<\underline{m}<m_2.
        \end{equation}
       By Corollary \ref{Regularity of maximizers}, $m_1$ and $m_2$ are local maximizers, while $m_3$ is the local minimizer of $\phi_{K,J,h}(m)$.
        \item[(b3)] If $h=\overline{\Psi}(K,J)$ then \eqref{m=tanh} has two solutions $\overline{m}<m_2$. Since $\overline{m}$ is the solution of $J=F_K(m)$, we have $\phi''_{K,J,h}(\overline{m})=0.$ Furthermore, since
        \begin{align}
            \phi_{K,J,h}^{(3)}(\overline{m})&=-F'_K(\overline{m})>0\\ \label{eq5}
            \phi''_{K,J,h}(m_2)&=J-F_K(m_2)<0,
        \end{align}
        $\overline{m}$ is the inflection point, while $m_2$ is the unique global maximizer  of $\phi_{K,J,h}(m)$. The case $h=\underline{\Psi}(K,J)$ can be proved similarly.
    \end{enumerate}
             \item[(c)] If $J=\tau(K)$ then $g_{K,J}(m)$  strictly increases on $(-1,1)$ by Proposition \ref{Properties of g}. Therefore, \eqref{m=tanh} has a unique solution which is the global maximizer of $\phi_{K,J,h}(m)$. Hence, the claims (c1) and (c2) hold. 
  \end{enumerate}  
We finally check the regularity order of the maximizers. Using Corollary \ref{Regularity of maximizers} and the above arguments, for $i\in\{0,1,2\}$, the maximizer $m_i$ is 2-regular. On the other hand, in the case (c1), we have $\phi_{K,J,h}''(\overline{m}_0)=0$ (by Corollary \ref{Regularity of maximizers}) and $\phi_{K,J,h}^{(4)}(\overline{m}_0)<0$ (by \eqref{fourth der}). Therefore, $\overline{m}_0$ is a 4-regular maximizer of $\phi_{K,J,h}$.
\end{proof}
\subsection{The coexistence surface}\label{sec:surface}
In the case that $J>\tau(K)$ and $\underline{\Psi}(K,J)<h<\overline{\Psi}(K,J)$ as described in Proposition \ref{Classification of critical points}, the function $\phi_{K,J,h}(m)$ has two local maximizers $m_1 (K,J,h)<m_2(K,J,h)$.  To identify the coexistence of two global maximizers of $\phi_{K,J,h}(m)$, we consider the following function
\begin{equation}
\Delta(K,J,h)=\begin{cases}
    \phi_{K,J,h}(m_2)-\phi_{K,J,h}(m_1),& \text{if } J>\tau(K),\; \underline{\Psi}(K,J)<h<\overline{\Psi}(K,J)\\
    \phi_{K,J,h}(\underline{m})-\phi_{K,J,h}(m_1),& \text{if } J>\tau(K),\; h=\underline{\Psi}(K,J)\\
    \phi_{K,J,h}(m_2)-\phi_{K,J,h}(\overline{m}),& \text{if } J>\tau(K),\; h=\overline{\Psi}(K,J)
\end{cases}.
\end{equation}
The next proposition constructs the coexistence surface by comparing the heights of the two local maxima.
    \begin{proposition}\label{Surface S}
    The function $\Delta(K,J,h)$ is continuous.  
        For each $K>0$ and $J>\tau(K)$, the equation $\Delta(K,J,h)=0$ has a unique solution denoted by $S(K,J)$. Moreover, $S(K,J)\in (\underline{\Psi}(K,J),\overline{\Psi}(K,J))$ and $S$ is smooth on $\{(K,J):K>0,\,J>\tau(K)\}$.
    \end{proposition}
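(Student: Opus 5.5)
The plan is to fix $(K,J)$ with $J>\tau(K)$ and study $h\mapsto\Delta(K,J,h)$ on the closed interval $[\underline{\Psi},\overline{\Psi}]$. The key identity is the envelope formula. On the open interval, the local maximizers $m_1(h)<m_2(h)$ are nondegenerate roots of $\phi'_{K,J,h}=h-g_{K,J}$, since $\phi''<0$ there by Corollary~\ref{Regularity of maximizers}. By the implicit function theorem they are therefore smooth (real-analytic) functions of $(K,J,h)$. Because $\phi'_{K,J,h}(m_i)=0$,
\[
\partial_h\Delta=\partial_h\phi_{K,J,h}(m)\big|_{m_2}-\partial_h\phi_{K,J,h}(m)\big|_{m_1}=m_2-m_1>0 .
\]
Likewise $\partial_K\Delta=(m_2^3-m_1^3)/3$ and $\partial_J\Delta=(m_2^2-m_1^2)/2$. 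Hence $\Delta$ is smooth and strictly increasing in $h$ on the open interval.

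\textbf{Continuity up to the endpoints.} As $h\downarrow\underline{\Psi}$, the branch $m_1(h)$ stays nondegenerate and converges to the root $m_1<\overline{m}$ of (b3). The branch $m_2(h)$, which lies in $(\underline{m},1)$ and solves $g_{K,J}(m)=h$ with $g_{K,J}$ increasing there, converges to $\underline{m}$. Continuity of $\phi$ in $(m,h)$ then shows that $\Delta$ extends continuously to the value given by the second line of the definition. The case $h\uparrow\overline{\Psi}$ is symmetric. Joint continuity in $(K,J,h)$ follows in the same way, because $\overline{m},\underline{m}$ are continuous in $(K,J)$: they are simple roots of $F_K(m)=J$ with $F_K'\neq0$, since $\overline{m}<\overline{m}_0<\underline{m}$. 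It follows that $\underline{\Psi},\overline{\Psi}$ are continuous too.

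\textbf{Endpoint signs.} At $h=\underline{\Psi}$, the point $\underline{m}$ is a critical point but not a local maximum; by (b3) it is an inflection point. The unique global maximizer is $m_1$, so $\phi(\underline{m})<\phi(m_1)$ and $\Delta<0$. This uses strictness of the global maximum: $m_1$ is the only critical point other than the inflection point, and the maximum cannot lie at $\pm1$. Symmetrically, $\Delta>0$ at $h=\overline{\Psi}$. By the intermediate value theorem and strict monotonicity, there is exactly one zero $S(K,J)$, and it lies in the open interval $(\underline{\Psi},\overline{\Psi})$.

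\textbf{Smoothness and the main obstacle.} Smoothness of $S$ follows from the implicit function theorem applied to $\Delta(K,J,h)=0$ near $(K,J,S(K,J))$. This point lies in the open set where $\Delta$ is smooth, since that set is open by continuity of $\underline{\Psi},\overline{\Psi}$, and $\partial_h\Delta=m_2-m_1\neq0$ there. Uniqueness of the zero identifies the local implicit solution with $S$. The step needing most care is continuity at the endpoints, specifically the convergence of the disappearing branch to the degenerate point. I would handle it through the monotonicity of $g_{K,J}$ on $(\underline{m},1)$ and on $(-1,\overline{m})$ together with the intermediate value theorem, rather than through the implicit function theorem, which fails there.
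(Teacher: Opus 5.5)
Your proposal is correct and follows essentially the same route as the paper: the envelope identity $\partial_h\Delta=m_2-m_1>0$, the endpoint signs coming from the unique global maximizer at the boundary fields in (b3), the intermediate value theorem, and the implicit function theorem applied to $\Delta=0$ for smoothness of $S$. You are in fact slightly more careful than the paper in two places: endpoint continuity, which you obtain from the continuous monotone inverse of $g_{K,J}$ on the outer branches, and the identification of the local implicit solution with $S$ via uniqueness of the zero.
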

    \begin{proof}
    The function \(\Delta\) is continuous in the open three-root region, since the two local maximizers \(m_1\) and \(m_2\) depend smoothly on \((K,J,h)\) by the implicit function theorem. Moreover, as \(h\) approaches the lower (resp. upper) boundary field, \(m_2\) (resp. \(m_1\)) converges to the corresponding turning point \(\underline m\) (resp. \(\overline m\)), while the other branch remains continuous. Thus, the definition of \(\Delta\) at the boundary fields gives a continuous extension.
    
    On the other hand, by Proposition \ref{Classification of critical points}, if $J>\tau(K)$ then $m_1$ (resp. $m_2$) is the unique global maximizer of $\phi_{K,J,h}(m)$ when $h=\underline{\Psi}(K,J)$ (resp.  $h=\overline{\Psi}(K,J)$). This implies that
    \begin{equation}
        \Delta(K,J,h)\begin{cases}
         <0, \text{ when } h=\underline{\Psi}(K,J)\\
            >0, \text{ when }  h=\overline{\Psi}(K,J)
        \end{cases}.
    \end{equation}
Therefore,  the equation $\Delta(K,J,h)=0$ has at least  one solution in  $(\underline{\Psi}(K,J), \overline{\Psi}(K,J))$ since $\Delta$ is continuous in $h$. Next, we prove the uniqueness.   For  $i \in \{1,2\}$,  $m_i$  is the solution of equation $\dfrac{\partial}{\partial m}\phi_{K,J,h}(m)=0$ and $\dfrac{\partial ^2}{\partial m^2} \phi_{K,J,h}(m_i)<0$. Therefore, the  implicit function theorem (see e.g. \cite[Proposition~A.2]{CMO24}) implies that  $m_i$ is a smooth function of $(K,J,h)$ in the three-root region for $i=1,2$. By the chain rule and the stationarity of $m_i$,
     \begin{equation}
     \frac{\partial}{\partial h}\phi_{K,J,h}(m_i(K,J,h))
     =m_i+\phi'_{K,J,h}(m_i)\frac{\partial m_i}{\partial h}=m_i.
     \end{equation}
Hence,
        \col{\eqan{\label{Delta'}
    \dfrac{\partial}{\partial h}\Delta(K,J,h)= \dfrac{\partial}{\partial h} \phi_{K,J,h}(m_2) -  \dfrac{\partial}{\partial h} \phi_{K,J,h}(m_1)=m_2-m_1>0.
    }}
     Hence, the function $\Delta(K,J,h)$ is strictly increasing in $h$, and thus the solution of the equation $\Delta(K,J,h)=0$ is unique. Since the two local maximizing branches are smooth  and $\partial_h\Delta=m_2-m_1>0$, the implicit function theorem also gives the asserted smoothness of $S(K,J)$.
    \end{proof}
The preceding proposition establishes the existence, uniqueness, and smoothness of the function $S(K,J)$ in \eqref{defsur}. Equivalently, $S(K,J)$ is the unique solution of $\Delta(K,J,h)=0$.
    
     \subsection{The critical curve and completion of the proof}\label{sec:critical}
     \begin{proposition}\label{curve gamma}
        Let $\gamma=\{(K(t),J(t),h(t)): t \in (0,1)\}$ be the curve defined in \eqref{defgamma}. Then for $(K,J,h)\in\gamma$, the global maximizer of $\phi_{K,J,h}$ is $4$-regular. 
    \end{proposition}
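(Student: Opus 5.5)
The plan is to reduce the statement to case (c1) of Proposition \ref{Classification of critical points}. For $(K,J,h)\in\gamma$ we will show that $J=\tau(K)$ and $h=g_{K,J}(\overline m_0(K))$, with $\overline m_0(K)=t$. That case then gives a unique stationary point $\overline m_0$, which is the global maximizer and is $4$-regular.

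For a given $t\in(0,1)$ we will require $m=t$ to satisfy $\phi'=\phi''=\phi^{(3)}=0$, using \eqref{first der}--\eqref{fourth der}.
\begin{itemize}
\item[] From $\phi^{(3)}(t)=0$ we get $2K=2t/(1-t^2)^2$, so $K=t/(1-t^2)^2$. This is positive, so the point lies in $\mathcal P$.
\item[] From $\phi''(t)=0$ we get $J=1/(1-t^2)-2Kt=(1-t^2-2t^2)/(1-t^2)^2=(1-3t^2)/(1-t^2)^2$.
\item[] From $\phi'(t)=0$ we get $h=\operatorname{atanh}(t)-Kt^2-Jt=\operatorname{atanh}(t)-(t^3+t-3t^3)/(1-t^2)^2$, which matches the third coordinate in \eqref{defgamma}.
\end{itemize}
This is a direct algebraic check.

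Next we identify $t$ with $\overline m_0(K(t))$. Since $F_K'(m)=-\phi^{(3)}_{K,J,h}(m)$ and $F_K'$ is strictly increasing with a unique zero at $\overline m_0(K)$ (proof of Proposition \ref{Properties of g}), the equation $\phi^{(3)}(t)=0$ says exactly that $t=\overline m_0(K(t))$. Then $\tau(K)=F_K(t)=J(t)$ by the second condition, so we are in case (c). The first condition says $h=g_{K,J}(t)=g_{K,J}(\overline m_0)$, which is case (c1).

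Proposition \ref{Classification of critical points}(c1) then yields that $\overline m_0=t$ is the unique solution of \eqref{m=tanh} and the global maximizer. It is $4$-regular because the first three derivatives vanish at $t$ and $\phi^{(4)}<0$ everywhere by \eqref{fourth der}.

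The only step requiring care is the identification $t=\overline m_0(K)$, that is, that the zero of $\phi^{(3)}$ is the minimizer of $F_K$. Strict monotonicity of $F_K'$ handles it. The rest is routine computation.
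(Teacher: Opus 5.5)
Your proof is correct and follows essentially the paper's argument: the paper solves $\phi'=\phi''=\phi^{(3)}=0$ to obtain the parametrization \eqref{defgamma}, observes that on $\gamma$ one has $J=\tau(K)$ and $m=\overline m_0(K)$, and concludes from strict concavity away from that point that it is the unique, $4$-regular global maximizer. You additionally justify the identification $t=\overline m_0(K(t))$ through the strict monotonicity of $F_K'$, which the paper only asserts. The paper also records the converse, that a $4$-regular maximizer can occur only on $\gamma$, which the statement as phrased does not require.
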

    \begin{proof}
       For $(K,J,h)\in\mathcal{P}$, if a maximizer of $\phi_{K,J,h}(m)$ is 4-regular, then this maximizer is the solution of the following system of equation
      \begin{align}
		\begin{cases}
			\phi'_{K,J,h}(m)=Km^2+Jm+h-\arctanh(m)=0\\
			\phi''_{K,J,h}(m)=2Km+J-\dfrac{1}{1-m^2}=0\\
			\phi^{(3)}_{K,J,h}(m)=2K-\dfrac{2m}{(1-m^2)^2}=0,
		\end{cases}
	\end{align}
or equivalently 
    $$\begin{cases}
		K=\dfrac{m}{(1-m^2)^2}\\
		J=\dfrac{1-3m^2}{(1-m^2)^2}\\
		h=\mathrm{atanh}(m)-\dfrac{m-2m^3}{(1-m^2)^2}.
	\end{cases}$$
Since we are restricted to the case $K \in (0, \infty)$, in the above system only $m \in (0,1)$ is admissible. Thus a quartically degenerate maximizer is possible only when $(K,J,h)$ belongs to the curve $\gamma$ explicitly defined in \eqref{defgamma}. Conversely, at every point of this curve,
we have $J=\tau(K)$ and $m=\overline m_0(K)$; by the strict concavity away from $m$, $m$ is the unique global maximizer, and it is $4$-regular.
    \end{proof}
   \begin{proof}[Proof of Theorem \ref{Phase diagram}]
       Let $\kS$ and $\gamma$ be the surface and curve of parameters defined in \eqref{defsur}  and \eqref{defgamma} respectively. Denote
       \begin{align*}
			D_0=&\{(K,J,h)\in\mathcal{P}: J\le \tau(K)\}\backslash \gamma\\
			D_1=&\{(K,J,h)\in\mathcal{P}: J> \tau(K), h<S(K,J)\}\\
			D_2=&\{(K,J,h)\in\mathcal{P}: J> \tau(K), h>S(K,J)\}
		\end{align*}
        For any $(K,J,h)\in\mathcal{P}$, using Propositions \ref{Classification of critical points}, \ref{Surface S} and \ref{curve gamma}, we have the following.
        \begin{enumerate}
            \item [(a)] If $(K,J,h)\in\mathcal{P}\backslash \mathcal{S}$, $\phi_{K,J,h}(m)$ has a unique global maximizer $m_*$ identified as following:
            \begin{align}
                m_*=  \begin{cases}
               m_i, &\text{ if } (K,J,h)\in D_i, \; i\in\{0,1,2\}\\
               \overline{m}_0, &\text{ if } (K,J,h)\in\gamma.
            \end{cases}
            \end{align}
           Moreover,  $\overline{m}_0$ is $4$-regular, and otherwise it is $2$-regular. 
           \item[(b)] If $(K,J,h)\in \mathcal{S}$, $\phi_{K,J,h}(m)$ has two global maximizers $m_1$ and $m_2$, which are both 2-regular.
        \end{enumerate}
It remains to prove the closure assertion: 
$$\overline{\mathcal S}^{\,\mathcal P}
=\mathcal S\cup\gamma.$$ 
Fix $(K_0,J_0,h_0)\in\gamma$. By
Proposition~\ref{curve gamma}, $J_0=\tau(K_0)$ and
$h_0=g_{K_0,J_0}(\overline{m}_0(K_0))$. For $J>\tau(K_0)$ let
$a_J<b_J$ be the two solutions of $F_{K_0}(m)=J$. Since
$F_{K_0}$ is strictly convex and has a unique minimizer $\overline{m}_0(K_0)$,
both $a_J$ and $b_J$ converge to $\overline{m}_0(K_0)$ as
$J\downarrow\tau(K_0)$. Consequently,
\[
\underline\Psi(K_0,J)=g_{K_0,J}(b_J)\rightarrow h_0,
\qquad
\overline\Psi(K_0,J)=g_{K_0,J}(a_J)\rightarrow h_0.
\]
Since $\underline\Psi(K_0,J)<S(K_0,J)<\overline\Psi(K_0,J)$,
we get $S(K_0,J)\to h_0$. Thus $\gamma\subset\overline{\mathcal S}^{\,\kP}$.

Conversely, suppose that
$(K_n,J_n,S(K_n,J_n))\in\mathcal S$ converges in $\kP$ to
$(K_0,J_0,h_0)$. Continuity of $\tau$ gives
$J_0\ge\tau(K_0)$. If $J_0>\tau(K_0)$, continuity of $S$
from Proposition~\ref{Surface S} shows that
$h_0=S(K_0,J_0)$, so the limit belongs to $\mathcal S$.
It remains to consider $J_0=\tau(K_0)$. Let $a_n<b_n$ be the
solutions of $F_{K_n}(m)=J_n$. These points stay in a compact
subinterval of $(-1,1)$: the parameters $(K_n,J_n)$ are bounded,
whereas $F_K(m)\to+\infty$ as $m\to\pm1$, uniformly for any bounded number $K$.
Every accumulation point of $a_n$ or $b_n$ solves
$F_{K_0}(m)=\tau(K_0)$, and hence equals the unique minimizer
$\overline{m}_0(K_0)$. Therefore $a_n,b_n\to \overline{m}_0(K_0)$ and by continuity
\[
\underline\Psi(K_n,J_n),\ \overline\Psi(K_n,J_n)
\rightarrow g_{K_0,\tau(K_0)}(\overline{m}_0(K_0)).
\]
As $S(K_n,J_n)$ lies strictly between these two quantities,
$h_0=g_{K_0,\tau(K_0)}(\overline{m}_0(K_0))$. By the parametrization
\eqref{defgamma}, $(K_0,J_0,h_0)\in\gamma$.
This proves the closure assertion.
\end{proof}

\section{Fluctuation of the magnetization and maximum likelihood estimators}\label{sec:applications}
In this section, all parameters are fixed in $\kP$. The geometry established in Theorem~\ref{Phase diagram} allows us to apply the general mean field fluctuation and maximum likelihood theory of \cite{CR}. We state the model specific limits explicitly and verify the nondegeneracy conditions for the estimators, while using Remark~2.2 of \cite{CR} for the standard mean field approximation assumptions.

\subsection{Magnetization fluctuations}\label{sec:mag}
\paragraph{How the fluctuation scales arise.}
The magnetization takes values $m_k=2k/N-1$, and its exact mass is proportional to
\begin{equation}\label{mag-saddlepoint}
    \binom{N}{k}\exp\{N u_{K,J,h}(m_k)\},\qquad
    u_{K,J,h}(m)=\frac K3m^3+\frac J2m^2+hm.
\end{equation}
Stirling's approximation shows that, near any interior global maximizer $m$, the logarithm of this mass is $N\phi_{K,J,h}(m_k)$ up to a slowly varying prefactor. If $m$ is $2r$-regular, its local expansion reads
\begin{equation}\label{mag-local-taylor}
N[\phi(m+xN^{-1/(2r)})-\phi(m)]
=\frac{\phi^{(2r)}(m)}{(2r)!}x^{2r}+o(1),
\end{equation}
for fixed $x$; throughout this discussion $\phi=\phi_{K,J,h}$. Thus the scale $N^{-1/(2r)}$ is obtained by balancing the loss in free energy against the factor $N$ in the Gibbs weight.

For a $2$-regular maximizer, $r=1$ and the leading term is quadratic; the limiting local density is Gaussian with variance $-1/\phi''(m)$. When the unique maximizer is $4$-regular, $r=2$ and the quadratic and cubic coefficients vanish; the negative fourth derivative produces a density proportional to $\exp\{\phi^{(4)}(m)x^4/24\}$ on the scale $N^{-1/4}$. The critical scaling is therefore a consequence of the local shape of the pressure, not of a different normalization of the Hamiltonian.

The presence of several global maximizers changes the \emph{unconditioned} limit. On $\mathcal S$, the two wells have the same free energy height and both survive as $N\to\infty$. Their leading Laplace contributions are proportional to
\begin{equation}\label{mag-well-Laplace}
\frac{1}{\sqrt{1-m_i^2}}
\int_{\mathbb R}\exp\left\{\frac12\phi''(m_i)x^2\right\}\,dx,
\qquad i=1,2.
\end{equation}
Consequently, $m_N$ has a two-point mixture limit, whereas its fluctuations \emph{conditional on one well} remain Gaussian at scale $N^{-1/2}$. This distinction between unconditional phase selection and conditional fluctuations is important. The statements below also give concentration estimates and Wasserstein error bounds, obtained from \cite{CR}.

For probability laws with finite first moments, write $d_W$ for their $1$-Wasserstein distance. Let $\mathcal M=\operatorname*{argmax}_{m\in[-1,1]}\phi_{K,J,h}(m)$. For each $2$-regular maximizer $m$, define
\begin{equation}\label{gaussian-variances}
\sigma_m^2=-\frac{1}{\phi_{K,J,h}''(m)}>0.
\end{equation}
On $\kS$, we have $|\mathcal{M}| = 2$, and let $\mathcal M=\{m_1,m_2\}$ with $m_1<m_2$ and set
\begin{equation}\label{explicit-weights}
w_i=\frac{[(1-m_i^2)(-\phi_{K,J,h}''(m_i))]^{-1/2}}
{\sum_{j=1}^2[(1-m_j^2)(-\phi_{K,J,h}''(m_j))]^{-1/2}},\qquad i=1,2.
\end{equation}

\begin{theorem}[Magnetization limit laws]\label{limit:magn}
Let $m_N$ be the magnetization under $\mu_{N,K,J,h}$. The following hold, with constants depending on fixed parameters and, where appropriate, on $\delta$.
\begin{enumerate}
\item[(a)]  If $(K,J,h)\in\kP\setminus(\kS\cup\gamma)$ and $m_*$ is the maximizer, then  for every sufficiently small $\delta>0$,
\begin{equation}\label{mag-regular-conc}
\mu_N(|m_N-m_*|>\delta)\le e^{-cN},\qquad
d_W\bigl(\mathcal L(\sqrt N(m_N-m_*)),\,N(0,\sigma_{m_*}^2)\bigr)\le CN^{-1/2}.
\end{equation}
\item[(b)]  If $(K,J,h)\in\kS$, then
\begin{equation}\label{mag-two-wells}
m_N\Rightarrow w_1\delta_{m_1}+w_2\delta_{m_2},\qquad
\mu_N\bigl(\operatorname{dist}(m_N,\{m_1,m_2\})>\delta\bigr)\le e^{-cN}.
\end{equation}
For $i=1,2$ and all sufficiently small $\delta>0$,
\begin{equation}\label{mag-coexist-CLT}
d_W\bigl(\mathcal L(\sqrt N(m_N-m_i)\mid |m_N-m_i|<\delta),\,
N(0,\sigma_{m_i}^2)\bigr)\le CN^{-1/2}.
\end{equation}
\item[(c)]  If $(K,J,h)\in\gamma$ and $m_*$ is the unique maximizer, then
\begin{equation}\label{mag-quartic}
\mu_N(|m_N-m_*|>\delta)\le e^{-cN}, \qquad d_W\bigl(\mathcal L(N^{1/4}(m_N-m_*)),\,\mathcal L(Y_*)\bigr)\le CN^{-1/4},
\end{equation}
where $Y_*$ has density proportional to $\exp(c_*x^4)$ and
\begin{equation}\label{quartic-coeff}
c_*:=\frac{\phi_{K,J,h}^{(4)}(m_*)}{4!}
=-\frac{1+3m_*^2}{12(1-m_*^2)^3}<0.
\end{equation}
\end{enumerate}
\end{theorem}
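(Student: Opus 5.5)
The plan is to derive all three parts from the exact one-dimensional representation of the law of $m_N$, feeding in the geometric information of Theorem~\ref{Phase diagram}. Concretely, I would either verify directly the hypotheses of the general mean field theorems of \cite{CR}, or run the Laplace/Stirling argument by hand. By \eqref{mag-saddlepoint}, $\mu_N(m_N=m_k)\propto \binom{N}{k}e^{Nu_{K,J,h}(m_k)}$. Stirling's formula, uniform on compact subsets of $(-1,1)$, gives
\[
\binom{N}{k}=\sqrt{\tfrac{2}{\pi N(1-m_k^2)}}\,e^{-NI(m_k)}(1+O(N^{-1})).
\]
Hence the weight is $\sqrt{2/(\pi N(1-m_k^2))}\,e^{N\phi(m_k)}(1+O(1/N))$. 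Near $\pm1$ a crude bound $\binom{N}{k}\le e^{-NI(m_k)}$ suffices, since every maximizer is interior.

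\emph{Concentration.} By Theorem~\ref{Phase diagram}, $\mathcal M$ is finite and interior, and $\phi$ is continuous on $[-1,1]$. So for small $\delta$ we have $\sup\{\phi(m):\operatorname{dist}(m,\mathcal M)>\delta\}\le p-\eta(\delta)$ with $\eta>0$. The mass near a maximizer is at least $e^{N(p-o(1))}$, and there are only $N+1$ lattice points. Together these give the $e^{-cN}$ bounds in (a), (b) and (c).

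\emph{Local limits.} Fix a maximizer $m$ of regularity $2r$ with $r\in\{1,2\}$; Theorem~\ref{Phase diagram} guarantees $2$-regularity in (a) and (b) and $4$-regularity in (c). Set $x=N^{1/(2r)}(m_k-m)$, so lattice spacing in $x$ is $2N^{1/(2r)-1}$. Taylor's theorem gives
\[
N[\phi(m_k)-\phi(m)]=\frac{\phi^{(2r)}(m)}{(2r)!}x^2\text{ or }x^4\;+\;O\bigl(|x|^{2r+1}N^{-1/(2r)}\bigr)
\]
for $|x|\le \delta N^{1/(2r)}$. The step I expect to be the main obstacle is the Wasserstein rate. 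The natural route compares the discrete law with the continuous density $\propto e^{\phi^{(2r)}(m)x^{2r}/(2r)!}$ via a Riemann-sum/Euler--Maclaurin estimate. The cubic Taylor remainder has size $N^{-1/2}$ in case (a) and the quintic remainder has size $N^{-1/4}$ in case (c). The lattice discretization error is of order $N^{1/(2r)-1}$, which is smaller. Also, $\phi^{(2r)}<0$ gives a Gaussian or quartic envelope that controls the tails uniformly.

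To avoid the bookkeeping, I would instead invoke the corresponding theorems of \cite{CR}. Their hypotheses are the standard mean-field approximation assumptions, which hold here by Remark~2.2 of \cite{CR} because the Hamiltonian is exactly $N u(m_N)$ with $u$ a polynomial. They also need the number and regularity of the maximizers, which is exactly what Theorem~\ref{Phase diagram} supplies. The rates $N^{-1/2}$ and $N^{-1/(2r)}=N^{-1/4}$ then come out directly. The conditioned statement \eqref{mag-coexist-CLT} follows in the same way, because conditioning on $|m_N-m_i|<\delta$ only restricts the sum to the region where the Taylor expansion is valid.

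\emph{Mixture weights and constants.} On $\mathcal S$, I would sum the local Laplace contributions. For well $i$ the mass is
\[
\sqrt{\tfrac{2}{\pi N(1-m_i^2)}}\,e^{Np}\,\frac{N^{1/2}}{2}\int e^{\phi''(m_i)x^2/2}dx\,(1+o(1))\;\propto\;[(1-m_i^2)(-\phi''(m_i))]^{-1/2}.
\]
Normalizing gives $w_i$ in \eqref{explicit-weights}, and together with concentration this yields \eqref{mag-two-wells}. Finally, \eqref{quartic-coeff} is obtained by substituting the formula \eqref{fourth der} for $\phi^{(4)}$ and dividing by $24$.
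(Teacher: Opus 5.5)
Your proposal is correct and follows essentially the paper's route. You verify (A1) through Theorem~\ref{Phase diagram} and the finite-volume hypotheses through Remark~2.2 of \cite{CR}, apply the general theorems of \cite{CR}, and read off the weights from the Stirling prefactor $(1-m_i^2)^{-1/2}$ times the Gaussian integral; the paper does the same, phrased in the variable $a=(1+m)/2$. One small slip in your hand heuristic: for $r=1$ the lattice spacing $2N^{-1/2}$ and the variation of the Stirling prefactor are of the same order as the cubic Taylor remainder, not smaller, though this does not affect the stated rates.
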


\begin{proof}
Set $a=(1+m)/2$ and
\[
F(a)=\frac K3(2a-1)^3+\frac J2(2a-1)^2+h(2a-1).
\]
The distribution of $X_N=N(1+m_N)/2$ has the form
\[
\mathbb P(X_N=k)=\frac{1}{Z_N}\binom Nk e^{NF(k/N)},\qquad 0\le k\le N.
\]
Remark~2.2 of \cite{CR} applies to this smooth mean-field energy: the
binomial term provides the finite-volume estimates (A2)--(A4), with the
prefactors at the maximizers as given there. Assumption (A1), namely the
location and regularity orders of the global maximizers of
$F(a)-a\log a-(1-a)\log(1-a)$, follows from
Theorem~\ref{Phase diagram} under the change of variables $m=2a-1$.
Thus Theorems~2.3--2.5 of \cite{CR} apply directly (with zero perturbation).

At a $2$-regular maximizer $m_i$, the limiting local density on the
$\sqrt N(m_N-m_i)$ scale is proportional to
$\exp\{\phi''(m_i)x^2/2\}$, giving variance
$-1/\phi''(m_i)$. On $\mathcal S$, the weights from
\cite[Remark~2.2 and Theorem~2.3]{CR} are proportional to
\[
\frac{1}{\sqrt{a_i(1-a_i)}}
\int_{\mathbb R}\exp\{\tfrac12\phi''(m_i)x^2\}\,dx,
\qquad a_i=\frac{1+m_i}{2},
\]
which reduces to \eqref{explicit-weights}. Finally, on $\gamma$,
Theorem~\ref{Phase diagram} gives a unique $4$-regular maximizer;
the quartic coefficient is $\phi^{(4)}(m_*)/24$.
The concentration and Wasserstein estimates in the statement follow
from Theorems~2.3--2.5 of \cite{CR}.
\end{proof}

\subsection{One-parameter maximum likelihood estimators}\label{sec:mle}
\paragraph{The statistical problem and definition of the estimator.}
For $N\ge2$, we observe \emph{one full configuration} $\sigma\in\{-1,+1\}^N$ sampled from $\mu_{N,K,J,h}$. We estimate one parameter $\theta\in\{K,J,h\}$ at a time, holding the other two fixed and known.  Each one-parameter submodel is an exponential family with sufficient statistic $f_\theta(m_N)$,  since  the likelihood of a configuration depends on the data only through $m_N$. More precisely, we write
\begin{equation}\label{profile-functions}
\mu_{N,\theta}(\sigma)=\frac{1}{Z_N(\theta)}
\exp\{N[\theta f_\theta(m_N)+g_\theta(m_N)]\},
\end{equation}
where
\begin{align}\label{profile-sufficient}
f_K(m)&=m^3/3,& g_K(m)&=Jm^2/2+hm,\\
f_J(m)&=m^2/2,& g_J(m)&=Km^3/3+hm,\\
f_h(m)&=m,    & g_h(m)&=Km^3/3+Jm^2/2.
\end{align}
The (per-spin) log likelihood is
\begin{equation}\label{log-likelihood}
\ell_N(\theta;\sigma)=\frac1N\log\mu_{N,\theta}(\sigma)
=\theta f_\theta(m_N)+g_\theta(m_N)-p_N(\theta),
\qquad p_N(\theta)=\frac1N\log Z_N(\theta).
\end{equation}
Maximizing $\ell_N$ is equivalent to matching the observed sufficient statistic with its model expectation. More precisely, the score and its derivative satisfy
\begin{equation}\label{score-correct}
\partial_\theta\bigl(N^{-1}\log\mu_{N,\theta}(\sigma)\bigr)
=f_\theta(m_N)-d_N(\theta),\qquad d_N(\theta)=\mathbb E_\theta[f_\theta(m_N)],
\end{equation}
\begin{equation}\label{MLE-variance-identity}
d_N'(\theta)=N\operatorname{Var}_\theta\bigl(f_\theta(m_N)\bigr)>0.
\end{equation}
For $N\ge2$, the statistic $f_\theta(m_N)$ is nonconstant under the Gibbs measure, so the variance in \eqref{MLE-variance-identity} is strictly positive. Hence the score equation has at most one solution, and the log likelihood is strictly concave. Whenever the observed statistic lies strictly between its minimal and maximal possible lattice values, the score equation has a unique finite solution; the \emph{unconstrained} one-parameter MLE is
\begin{equation}\label{def:mle}
\widehat\theta_N=d_N^{-1}(f_\theta(m_N)).
\end{equation}
When the observed statistic attains its minimum or maximum possible value, a finite MLE does not exist. In the regimes considered below, these events have exponentially small probability, and we may assign an arbitrary finite value to the estimator on them without affecting the limiting distributions. 
\paragraph{Why the estimation rates depend on the phase.}
The score representation also indicates how the magnetization limit enters the MLE. Suppose first that $m$ is the unique $2r$-regular maximizer and that $f_\theta'(m)\ne0$. Under a local change $\theta\mapsto\theta+tN^{-\alpha}$, the additional exponent in the Gibbs weight is $tN^{1-\alpha} f_\theta(m_N)$. The constant part $f_\theta(m)$ cancels after normalization. On the magnetization scale $m_N=m+N^{-1/(2r)}x$, the remaining exponent becomes
\begin{equation}\label{mle-local-tilt-mechanism}
 t N^{1-\alpha-1/(2r)}f_\theta'(m)x+o(1).
\end{equation}
The perturbation thus has an order one effect precisely when $\alpha=1-1/(2r)$. For a regular well this gives the classical $N^{-1/2}$ parameter scale and a Gaussian MLE limit; for a quartic well this gives $N^{-3/4}$ and a non-Gaussian limit determined by the corresponding tilted quartic density. 

On the coexistence surface, the mechanism is different. Let $m_-$ and $m_+$ denote the maximizing wells ordered by $f_\theta(m_-)<f_\theta(m_+)$. A local perturbation $t/\sqrt N$ shifts their relative log weights by
$ t\sqrt N [f_\theta(m_+)-f_\theta(m_-)]$. Hence, the positive perturbation selects $m_+$ and the negative one selects $m_-$. Conditioned on the well selected, the fluctuations of the statistic are Gaussian, but the original sample may have come from either well. Inverting the score therefore produces two one-sided normal components and an atom at zero. The weights depend on the two well probabilities; the atom has mass $1/2$ in the nondegenerate two-well situation here. This also explains why it is necessary to check both $f_\theta'(m_i)\ne0$ and $f_\theta(m_1)\ne f_\theta(m_2)$ before applying \cite[Theorem~4.1]{CR}.

For the cubic sufficient statistics, $f_K'(m)=m^2$, $f_J'(m)=m$, and $f_h'(m)=1$. Thus nondegeneracy fails for estimation of $K$ or $J$ exactly when $m=0$ is a global maximizer, while it never fails for estimation of $h$. The relevant exceptional set can be described explicitly from the phase diagram as follows.  Set
\begin{equation}\label{Rzero}
\mathcal R_0=\{(K,J,0)\in\kP:J\le\tau(K)\}
\cup\{(K,J,0)\in\kP:\tau(K)<J<1,\ S(K,J)\ge0\},
\end{equation}
and define
\begin{equation}\label{Rbeta}
\mathcal R_K=\mathcal R_J=\mathcal R_0,\qquad \mathcal R_h=\varnothing.
\end{equation}
The following result shows that for $\theta\in\{K,J,h\}$, the quantity $f_\theta'(m)$ may vanish at a global maximizer only on $\mathcal R_\theta$.

\begin{lemma}[Nondegeneracy and separation of wells]\label{region of g.max. 0}
The point $m=0$ is a global maximizer of $\phi_{K,J,h}$ exactly when $(K,J,h)\in\mathcal R_0$. Moreover, on $\kS$, the two global maximizers satisfy $f_\theta(m_1)\ne f_\theta(m_2)$ for every $\theta\in\{K,J,h\}$.
\end{lemma}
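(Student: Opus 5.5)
The plan is to reduce everything to the stationarity equation \eqref{m=tanh} and Proposition~\ref{Classification of critical points}. Since $\phi'_{K,J,h}(0)=h$, the point $m=0$ can be a maximizer only if $h=0$. So the first claim reduces to deciding, for $h=0$, when the stationary point $0$ is global. I will use two facts. First, $\phi''_{K,J,0}(0)=J-1=J-F_K(0)$. Second, $F_K'(0)=-2K<0$, so $0<\overline m_0(K)$ and $\tau(K)<F_K(0)=1$.

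The case $J\le\tau(K)$ is immediate. Here $g_{K,J}$ is increasing, so $0$ is the only solution of \eqref{m=tanh} and is the global maximizer by Proposition~\ref{Classification of critical points}(a),(c). This gives the first set in \eqref{Rzero}.

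Now let $J>\tau(K)$ with $h=0$.
\begin{itemize}
\item If $J\ge1$, then $\phi''(0)\ge0$. For $J>1$, Corollary~\ref{Regularity of maximizers}(ii) gives $\phi''(0)>0$, so $0$ is not a maximizer. For $J=1$, we have $\phi''(0)=0$ and $\phi^{(3)}(0)=2K\ne0$, so $0$ is an inflection point. Either way $0$ is not a maximizer.
\item If $\tau(K)<J<1$, then $F_K(0)>J$ and $0<\overline m_0$, so $0<\overline m$. The stationary point $0$ therefore lies on the left branch, i.e.\ $0=m_1(K,J,0)$.
\end{itemize}
In the second case, two sub-cases arise. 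If $0<\underline\Psi(K,J)$, then $m_1$ is the unique global maximizer, and also $S(K,J)>\underline\Psi>0$. Otherwise $h=0\in[\underline\Psi,\overline\Psi)$, and $m_1$ is a global maximizer iff $\Delta(K,J,0)\le0$. Since $\partial_h\Delta>0$ (Proposition~\ref{Surface S}), this holds iff $S(K,J)\ge0$. Note that $h=0=\overline\Psi$ is impossible here, because then $m_1=\overline m$ would be an inflection point, contradicting $0<\overline m$. Combining the two sub-cases yields exactly the second set in \eqref{Rzero}. Finally, $\gamma$ contributes nothing new: on $\gamma$ the maximizer is $t>0$, and it has $J=\tau(K)$ anyway.

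For the separation claim on $\mathcal S$, $f_h(m)=m$ and $f_K(m)=m^3/3$ are injective, so $m_1\ne m_2$ suffices. The only nontrivial case is $f_J$, where I must rule out $m_1=-a$, $m_2=a$ with $a>0$.
\begin{itemize}
\item Equal heights: the entropy and the $J$-term are even, so $\phi(a)-\phi(-a)=\tfrac{2K}{3}a^3+2ha=0$, i.e.\ $h=-Ka^2/3$.
\item Stationarity at $\pm a$: adding the two equations \eqref{m=tanh} cancels the odd terms $Jm$ and $\operatorname{atanh}(m)$, leaving $2Ka^2+2h=0$, i.e.\ $h=-Ka^2$.
\end{itemize}
These two values of $h$ are incompatible since $K>0$ and $a>0$. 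The main care point overall is the boundary bookkeeping in the $J>\tau(K)$ case (the endpoints $h=\underline\Psi$, $\overline\Psi$ and the value $J=1$), which the sub-case analysis above handles.
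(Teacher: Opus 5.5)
Your proof is correct and follows essentially the same route as the paper. You reduce to $h=0$, split into $J\le\tau(K)$, $J\ge 1$ and $\tau(K)<J<1$ (where $0<\overline m$ puts $0$ on the concave left branch, and monotonicity of $\Delta$ in $h$ gives the criterion $S(K,J)\ge 0$), and for $f_J$ you combine the summed stationarity equations $h=-Ka^2$ with the equal-height condition. Your explicit treatment of the sub-case $0<\underline\Psi(K,J)$ and of the endpoint $h=\overline\Psi(K,J)$ is slightly more careful than the paper's one-line appeal to $\partial_h\Delta>0$. You could also rule out $0\ge\overline\Psi$ directly, since $\overline\Psi=g_{K,J}(\overline m)>g_{K,J}(0)=0$.
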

\begin{proof}
A global maximizer at zero must satisfy $h=\phi'(0)=0$. If $J\le\tau(K)$, the stationary point $m=0$ is unique and hence globally maximizing. If $J>1$, then $\phi''(0)=J-1>0$ and if $J=1$, then $\phi''(0)=0$ and $\phi^{(3)}(0)=2K>0$. Thus zero is not a maximizer in either of those cases. For $\tau(K)<J<1$, the left turning point $\overline m$ of $g_{K,J}$ is strictly positive since $F_K(0)=1>J$ and $F_K$ decreases toward its unique minimizer at a positive point. Hence zero lies on the strictly concave left branch. As the difference between the two local maximum heights increases strictly with $h$, the left branch is globally maximizing at $h=0$ precisely when $0\le S(K,J)$. This proves the first claim.

For $\theta=K$ or $h$, $f_\theta$ is strictly increasing, so $f_\theta(m_1)\ne f_\theta(m_2)$. For $\theta=J$, equality would require $m_1=-a$ and $m_2=a$ for some $a>0$. The two stationarity equations force $h=-Ka^2$ and $J=\operatorname{arctanh}(a)/a$; but then
\begin{equation}
\phi(a)-\phi(-a)=\frac{2K}{3}a^3+2ha=-\frac{4K}{3}a^3\ne0,
\end{equation}
contradicting coexistence.
\end{proof}

\begin{theorem}[One-parameter MLE limits]\label{limit:mle}
Fix $(K,J,h)\in\kP$ and let $\widehat\theta_N$ be defined by \eqref{def:mle}, with the other two parameters known. Then for every $\theta\in\{K,J,h\}$ the following hold.
\begin{enumerate}
\item[(a)]  If $(K,J,h)\in\kP\setminus(\kS\cup\gamma\cup\mathcal R_\theta)$, then
\begin{equation}\label{Ua}
\sqrt N(\widehat\theta_N-\theta)\Rightarrow
\mathcal N\left(0,\frac{-\phi_{K,J,h}''(m_*)}{[f_\theta'(m_*)]^2}\right).
\end{equation}
\item[(b)] If $(K,J,h)\in\kS\setminus\mathcal R_\theta$, let $m_-$ and $m_+$ be the two global maximizers in increasing order of $f_\theta(m)$; write $w_-,w_+$ for their weights in \eqref{explicit-weights} and $v_\pm=-\phi''_{K,J,h}(m_\pm)/[f_\theta'(m_\pm)]^2$. Then
\begin{equation}\label{Ub}
\sqrt N(\widehat\theta_N-\theta)\Rightarrow U_b,
\qquad\mathcal L(U_b)=\frac12\delta_0+
\frac{w_-}{2}\mathcal L(-|Z_-|)+\frac{w_+}{2}\mathcal L(|Z_+|),
\end{equation}
where $Z_\pm\sim N(0,v_\pm).$
\item[(c)]  If $(K,J,h)\in\gamma$, then
\begin{equation}\label{Uc}
N^{3/4}(\widehat\theta_N-\theta)\Rightarrow U_c,\qquad
\mathbb P(U_c\le t)=\mathbb P\bigl\{Y_*(0)\le\mathbb E[Y_*(t)]\bigr\},\quad t\in\mathbb R,
\end{equation}
where $Y_*(t)$ has a density proportional to
$\exp\{c_*x^4+t f_\theta'(m_*)x\}$ with $c_*$ from \eqref{quartic-coeff}.
\end{enumerate}
\end{theorem}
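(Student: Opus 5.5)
The plan is to reduce Theorem~\ref{limit:mle} to \cite[Theorem~4.1]{CR}, in the same way Theorem~\ref{limit:magn} was reduced to Theorems~2.3--2.5 there. The only model-specific work is to verify the hypotheses of that theorem and to identify the limit laws explicitly.

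\textbf{Step 1: the exponential-family setting.} Pass to $X_N=N(1+m_N)/2$, as in the proof of Theorem~\ref{limit:magn}. Then each one-parameter submodel \eqref{profile-functions} is a mean-field exponential family whose energy is linear in $\theta$, with sufficient statistic $f_\theta(m_N)$ and smooth base energy $g_\theta$. Remark~2.2 of \cite{CR} supplies the finite-volume assumptions (A2)--(A4) uniformly for $\theta$ in a neighbourhood of the true value. This matters because Theorem~4.1 of \cite{CR} needs the magnetization limits under the perturbed parameters $\theta+tN^{-\alpha}$, not only at $\theta$.

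Assumption (A1) at the true parameter follows from Theorem~\ref{Phase diagram}. There are three cases:
\begin{itemize}
\item on $\kP\setminus(\kS\cup\gamma)$, a unique $2$-regular maximizer;
\item on $\kS$, two $2$-regular maximizers;
\item on $\gamma$, a unique $4$-regular maximizer.
\end{itemize}

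\textbf{Step 2: nondegeneracy and separation.} This is exactly what Lemma~\ref{region of g.max. 0} provides. Off $\mathcal R_\theta$ we have $f_\theta'(m)\ne0$ at every global maximizer, since $f'_K=m^2$, $f'_J=m$ and $f'_h=1$ vanish only at $m=0$. On $\kS$ we have $f_\theta(m_1)\ne f_\theta(m_2)$.

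For part (c), $\gamma$ has $m_*=t\in(0,1)$, so $m_*>0$ and $f_\theta'(m_*)\ne0$ for all three $\theta$. This explains why no exceptional set is removed in (c).

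\textbf{Step 3: identifying the limit laws.} Apply the conclusions of \cite[Theorem~4.1]{CR} and compute. Write $\widehat\theta_N\le\theta+tN^{-\alpha}$ as $f_\theta(m_N)\le d_N(\theta+tN^{-\alpha})$, using the monotonicity \eqref{MLE-variance-identity}. Then expand both sides using the local tilt \eqref{mle-local-tilt-mechanism}.

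\emph{Part (a).} Take $\alpha=1/2$. The tilted magnetization is $N(t f_\theta'(m_*)\sigma^2,\sigma^2)$ on the $\sqrt N$ scale, with $\sigma^2=-1/\phi''(m_*)$. The event becomes $f_\theta'(m_*)Z\le t[f_\theta'(m_*)]^2\sigma^2$, which gives the variance $-\phi''(m_*)/[f'_\theta(m_*)]^2$ in \eqref{Ua}.

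\emph{Part (b).} The unperturbed sample lies in well $m_\pm$ with probability $w_\pm$ by Theorem~\ref{limit:magn}(b). For $t>0$, the perturbed expectation $d_N$ is asymptotically $f_\theta(m_+)+O(N^{-1/2})$, because the relative weight $e^{t\sqrt N(f(m_+)-f(m_-))}$ selects $m_+$. Hence samples from $m_-$ always satisfy the event, while samples from $m_+$ satisfy it with a one-sided Gaussian probability. Symmetrically for $t<0$.

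Assembling these gives the law $U_b$. The atom of mass $1/2$ comes from the Gaussian in the selected well: exactly half of its mass falls on the "wrong" side at $t=0^\pm$. This is the step I would check most carefully against the exact form of the limit stated in \cite{CR}, including the factors $w_\pm/2$.

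\emph{Part (c).} Take $\alpha=3/4$. The tilted law of $N^{1/4}(m_N-m_*)$ converges to $Y_*(t)$. The event becomes $N^{1/4}(m_N-m_*)\le N^{1/4}\bigl(d_N(\theta+tN^{-3/4})-f(m_*)\bigr)/f'(m_*)$, and the right-hand side converges to $\mathbb E[Y_*(t)]$. Combined with Theorem~\ref{limit:magn}(c) at $t=0$, this yields \eqref{Uc}.

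\textbf{Main obstacle.} I expect the main obstacle to be justifying uniformity of the magnetization limits along perturbed parameters, in particular near $\gamma$. An $N^{-3/4}$ perturbation of $(K,J,h)$ generally leaves the curve $\gamma$, so (A1) must be applied in its perturbed form. I would handle this through the perturbation framework of \cite{CR}: treat $tN^{1-\alpha}f_\theta$ as an admissible vanishing perturbation of the fixed energy, rather than re-deriving the phase diagram at the moved parameter.
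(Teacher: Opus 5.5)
Your proposal is correct and follows essentially the same route as the paper. Both verify the hypotheses of \cite[Theorem~4.1]{CR} through Theorem~\ref{Phase diagram}, \cite[Remark~2.2]{CR} (including the perturbations $tf_\theta$), Lemma~\ref{region of g.max. 0}, and $m_*\in(0,1)$ on $\gamma$. Both then identify the limits by inverting the monotone map $d_N$ under the local tilt, with well selection on $\kS$ producing the atom of mass $1/2$.

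One point to tighten is in (b). For $t>0$, "$O(N^{-1/2})$" is not enough; you need the exact coefficient $d_N(\theta+t/\sqrt N)=f_\theta(m_+)+N^{-1/2}t[f_\theta'(m_+)]^2\sigma_{m_+}^2+o(N^{-1/2})$, computed as in your part (a). This is what gives the probability $\Phi(t/\sqrt{v_+})$ in the selected well.
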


\begin{proof}
We verify the conditions of the general MLE theorem \cite[Theorem~4.1]{CR}. The representation \eqref{profile-functions} is linear in the estimated parameter, and $f_\theta,g_\theta$ are smooth polynomials. The model has finitely many interior maximizers with regularity orders $2$ or $4$ by Theorem~\ref{Phase diagram}; the finite-volume conditions for this mean-field model and for the polynomial perturbations $t f_\theta$ are covered by \cite[Remark~2.2]{CR}. On $\kP\setminus\mathcal R_\theta$, Lemma~\ref{region of g.max. 0} ensures $f_\theta'(m)\ne0$ at each maximizing well. On coexistence, the maximizing wells have equal regularity order and distinct $f_\theta$-values, so the extremal well and nondegeneracy conditions in \cite[Theorem~4.1]{CR} hold. On $\gamma$ the maximizer satisfies $m_*\in(0,1)$, hence $f_\theta'(m_*)>0$ for all three parameters.

Therefore, all the conditions of \cite[Theorem~4.1]{CR} are satisfied. Applying this theorem, we obtain the scaling limits stated in Theorem~\ref{limit:mle}: the MLEs have scale $N^{-1/2}$ in the regular and coexistence regimes, and $N^{-3/4}$ on the critical curve $\gamma$.\\

For completeness, we briefly explain how the
limiting distributions take their explicit forms
in the present model. If $m$ is a unique $2k$-regular maximizer, perturb the unknown parameter by $\theta_{N,t}=\theta+tN^{-(1-1/(2k))}$. The distribution of the rescaled magnetization under the perturbed law converges to a random variable $Y_m(t)$ with density proportional to
\begin{equation}\label{tilted-law}
\exp\left\{\frac{\phi^{(2k)}_{K,J,h}(m)}{(2k)!}x^{2k}+t f_\theta'(m)x\right\}.
\end{equation}
The moment estimates of \cite{CR} give the corresponding first-moment convergence. Since $d_N$ is strictly increasing,
\begin{equation}\label{MLE-generic-CDF}
\mathbb P\{N^{1-1/(2k)}(\widehat\theta_N-\theta)\le t\}
\longrightarrow\mathbb P\{f_\theta'(m)Y_m(0)\le f_\theta'(m)\mathbb E[Y_m(t)]\}.
\end{equation}
For $k=1$, Gaussian exponential tilting yields \eqref{Ua}. For $k=2$, the maximizer is on $\gamma$ and $f_\theta'(m)>0$, giving \eqref{Uc}. The scaling $N^{3/4}$ is the parameter-estimation scale; the magnetization itself has scale $N^{-1/4}$.

On $\kS$, write $\Phi$ for the standard normal distribution function. Because $f_\theta(m_+)>f_\theta(m_-)$, the perturbed law with parameter $\theta+t/\sqrt N$ selects the $m_-$ well for fixed $t<0$ and the $m_+$ well for fixed $t>0$. The score identity shows, for each $t\ne0$, that the limiting distribution function is
\begin{equation}\label{MLE-coexist-CDF}
\lim_{N\to\infty}\mathbb P_\theta\{\sqrt N(\widehat\theta_N-\theta)\le t\}
=\begin{cases}
 w_-\Phi(t/\sqrt{v_-}),&t<0,\\
 w_-+w_+\Phi(t/\sqrt{v_+}),&t>0.
\end{cases}
\end{equation}
Indeed, on the well selected by the perturbation the score threshold has a Gaussian displacement, while observations in the opposite well fall entirely below or above that threshold. The limiting left and right values at zero are $w_-/2$ and $w_-+w_+/2$, respectively; their difference is $1/2$. Formula \eqref{MLE-coexist-CDF} is the distribution function of the half-normal/atomic mixture in \eqref{Ub}.
\end{proof}

\noindent \textbf{Acknowledgement}. The authors thank the Vietnam Institute for Advanced Study in Mathematics (VIASM) for its hospitality and the fruitful research environment, where part of this work was carried out.

\end{document}